\theoremstyle{plain}
\newtheorem{prop}[subsection]{Proposition}
\newtheorem{sublemm}[subsubsection]{Lemma}
\newtheorem{lemm}[subsection]{Lemma}
\newtheorem{theo}[subsection]{Theorem}
\newtheorem{observ}[subsection]{Observation}
\def\x{\times}
\def\t{\otimes}
\def\fdr{\rightarrow}
\def\PP{\mathcal P}
\def\KK{\mathbb K}
\def\NN{\mathbb N}
\DeclareMathOperator{\coker}{Coker}
\DeclareMathOperator{\Span}{Span}
\title[PBW criterion for operads]{A Poincar\'e-Birkhoff-Witt criterion \\ for Koszul operads}
\author{Eric Hoffbeck}
\address{Laboratoire Paul Painlev\'e, Cit\'e Scientifique - B\^atiment M2, Universit\'e de Lille 1,
59655 Villeneuve d'Ascq Cedex, France}
\email{Eric.Hoffbeck@math.univ-lille1.fr}
\subjclass[2000]{18D50 (55P48); 16S37}
\begin{document}

\begin{abstract}
The aim of this article is to give a criterion, generalizing the criterion introduced by Priddy for algebras, to verify that an operad is Koszul. We define the notion of a Poincar\'e-Birkhoff-Witt basis in the context of operads. Then we show that an operad having a Poincar\'e-Birkhoff-Witt basis is Koszul. Besides, we obtain that the Koszul dual operad has also a Poincar\'e-Birkhoff-Witt basis.

We check that the classical examples of Koszul operads (commutative, associative, Lie) have a Poincar\'e-Birkhoff-Witt basis. 
\end{abstract}
    
\maketitle

\setcounter{tocdepth}{1} \tableofcontents

The notion of an operad is used to model categories of algebras.
An appropriate (co)homologicy theory is associated to each category of algebras associated to an operad.
The Koszul duality of operads, introduced by Ginzburg and Kapranov \cite{GK}, allows us to understand the structure
of the (co)homologicy theory associated to some operads, the Koszul operads :
When an operad is Koszul, we know exactly the multiplicative structure of the associated (co)homology
and we have an explicit complex which allows us to determine practically the (co)homology of an algebra. 
Usual examples include the Hochschild complex for the associative operad $\mathcal As$,
the Chevalley-Eilenberg complex for the Lie operad $\mathcal Lie$,
the Harrison complex for the commutative operad $\mathcal Com$.

We refer the reader to \cite{Loday, MSS} for a comprehensive introduction to operads and to \cite{Benoit, GK} for the theory of Koszul operads.

\vspace{0.3cm}

The aim of this article is to give a criterion, generalizing the criterion introduced by Priddy for algebras in \cite{Prid}, to show that an operad is Koszul. This criterion relies on the existence of a basis, called Poincar\'e-Birkhoff-Witt (for short, we write PBW) basis, together with a suitable ordering.

In the case of an algebra $A=\KK<x_1, \ldots, x_n>/I$, a PBW basis consists of a set of monomial representatives of a basis of $A$ so that the product of basis elements remains in the basis or reduces to a sum of larger (for an appropriate order) elements in the basis. A PBW algebra is an algebra equipped with such a basis. Priddy's criterion asserts that a PBW algebra is Koszul. In the context of operads, we replace monomials by treewise compositions of generating operations and we adapt the order property. In this fashion, we generalise Priddy's definition to have an appropriate notion of a PBW basis for operads, and we prove that a PBW operad is Koszul. This gives an answer to a question asked by Kriz  in his review \cite{Kriz} of the article of Ginzburg and Kapranov \cite {GK}. In the article, we prove also that the Koszul dual of a PBW operad is a PBW operad as well. Then we shall see that many usual operads (including commutative, Lie, associative) are PBW.

\vspace{0.3cm}

In sections 1-2, we recall the definitions of the operadic bar construction and conventions on trees used in the definition of an operadic PBW basis. In sections 3-4, we define the notion of a PBW operad and we prove the criterion, such an operad is Koszul. In section 5, we prove that the Koszul dual operad of a PBW operad is PBW as well, with an explicit basis. In section 6, we address the case of non-symmetric operads. To conclude the paper, we examine applications of the criterion in some examples. 

\vspace{0.3cm}

\textbf{Conventions.} We are given a ground field $\KK$, fixed once and for all, of any characteristic. We deal with differential lower graded modules over $\KK$ (for short dg-modules). We only consider operads $\PP$ equipped with a trivial differential, but possibly with a grading. In this context, the usual sign rule applies to the elements of $\PP$. A non-graded operad can be viewed as a graded operad concentrated in degree $0$. 

\textbf{Remark.} Our theorems remain true if $\KK$ is a ring, provided that we restrict ourselves to objects formed from free $\KK$-modules. But the basis conditions are in this context more difficult to verify.

\section{Bar construction and Koszul duality for operads}
In this section, we recall the definition of the reduced bar construction and the definition of Koszul duality for operads. For more details and references, we refer the reader to \cite{Benoit}.

\subsection{Augmentation ideal of an operad}
The {\it identity operad} is defined by $I(r)=\KK$ for $r=1$ and $I(r)=0$ for $r \neq 1$. 
An operad $\PP$ equipped with a morphism $\epsilon : \PP \fdr I$ is called an augmented operad. The augmentation ideal of $\PP$ is $\tilde{\PP}= \ker \epsilon$. As $\epsilon$ is a retract of the identity morphism, we have a splitting $\PP = I \oplus  \tilde{\PP}$.

\subsection{Reduced bar construction}
Recall that the {\it suspension of a dg-module} $M$ is the dg-module $\Sigma M$ defined by $\KK e \t M$, where $deg(e)=1$. We have a natural identification $(\Sigma M)_d = M_{d-1}$.
For a non graded operad $\PP$, the module $\Sigma \tilde{\PP}(r)$ is equal to the module $\tilde{\PP}(r)$ in degree 1 and is zero in degree $* \neq 1$.

The {\it reduced bar construction} $B(\PP)$ is a quasi-cofree cooperad defined by $F^c(\Sigma \tilde{\PP})$, the cofree cooperad generated by the suspension $\tilde{\PP}$. The bar construction $B(\PP)$ is equipped with a differential is given by a coderivation $\partial : F^c(\Sigma \tilde{\PP}) \fdr F^c(\Sigma \tilde{\PP})$ which is determined by the partial composition products of $\PP$.
Recall that $F^c(\Sigma \tilde{\PP})$ is generated by tensors $\bigotimes_{v\in V(\tau)} x_v$, where $\tau$ ranges over trees, the notation $V(\tau)$ refers to the set of vertices of $\tau$ and $x_v$ is an element of $\Sigma \tilde{\PP}$ associated to each vertex. More details on this construction are given in section \ref{bardetails}.

\vspace{0.3cm}

We are interested in the homology of this bar complex. To calculate it, we use that many operads come equipped with a weight grading.

\subsection{Modules equipped with a weight grading}\label{poidstens}
We consider $\KK$-modules  $V$ equipped with a weight grading, a splitting $V = \bigoplus V_{(s)}$.
In the case of a dg-module $V$, the homogeneous components $V_{(s)}$ are supposed to be sub-dg-modules of $V$.
A tensor product of modules equipped with a weight grading inherits a natural weight grading such that 
$(V \t W) _{(n)} = \bigoplus_{s+t=n} V_{(s)} \t W_{(t)}$. 

\subsection{Operads equipped with a weight grading}\label{poidsoperadelib}
An operad $\PP$ is {\it equipped with a weight grading} if each term $\PP(n)$ is weight graded and the composition product
$\PP \circ \PP \fdr \PP$ preserves the weight grading.
This condition asserts equivalently that the partial composition product
of homogeneous elements $p \in \PP_{(s)}(m)$ and $ q  \in \PP_{(t)}(n)$
verify $p \circ_i q \in \PP_{(s+t)} (m+n-1)$.

An operad equipped with a weight grading is called {\it connected} if
$$\PP_{(0)}(r) = \left \{ \begin{array}{rl} \KK . 1 & \text{for $r=1$} \\ 0 & \text{else} \end{array} \right. $$
A connected operad is automatically augmented, the augmentation being the projection on the weight $0$ component. We have $\tilde{\PP}_{(s)}=\PP_{(s)}$ if $s \neq 0$.

In what follows, we will use the free operad $F(M)$. This operad has a natural weight which makes it a graded operad. Recall briefly that 
the free operad $F(M)$, like the cofree cooperad $F^c(M)$, is generated by tensors on trees $\bigotimes_{v\in V(\tau)} x_v$, representing formal compositions of operations.
The weight of such a tensor in $F(M)$ is given by its number of factors $x_v$.
Notice that the free operad is connected. We will go back to the construction of $F(M)$ in section \ref{operadelibre}.

\subsection{Homogeneous operadic ideals and quotients}

A {\it homogeneous operadic ideal} is an operadic ideal $I$ such that $I= \bigoplus I_{(s)}$, where 
$I_{(s)} = I \cap \PP_{(s)}$.

We observe that the quotient of an operad equipped with a weight by a homogeneous ideal is equipped naturally with a weight.
This assertion is an obvious generalization of a classical result for algebras.

\subsection{Quadratic operads}
A {\it quadratic operad} is an operad such that $\PP = F (M)/I$, where $I=(\overline{R})$ is the operadic ideal generated by $R \subset F_{(2)}(M)$.

For the Koszul duality, we use $\overline{R} \subset F_{(2)}(M)$ the sub-$\Sigma_*$-module generated by $R \subset F_{(2)}(M)$. This sub-$\Sigma_*$-module generates the same operadic ideal $(R) = (\overline{R})$.

We will see that the elements of $(\overline{R})$ are represented by trees where one of the vertices is labelled by an element of $\overline{R}$ and the other vertices by elements of $M$.

A quadratic operad has a natural weight grading, induced by the weight grading of the free operad.

For a quadratic operad such that $M(0)=0$, we have automatically 
$$\PP_{(0)}(r) = \left \{ \begin{array}{rl} \KK . 1, & \text{if $r=1,$} \\ 0, & \text{otherwise.} \end{array} \right. $$ 
We have a natural isomorphism $\displaystyle{\PP_{(1)}(r) = M(r)}$. Moreover, we have $\PP_{(2)}(r) = \mathcal F_{(2)} (M)/\overline{R}$. 
 
The operads associated respectively to the associative, commutative, and Lie algebras are quadratic.

\subsection{Weight grading on the bar construction}
If $\PP$ is equipped with a weight grading, then $B(\PP)$ has an induced weight grading. Formally, we use that $B(\PP)$ is spanned by tensors $\bigotimes_v p_v$. The weight of such a tensor is the sum of the weight of the factors $p_v$, as defined in section \ref {poidstens}.
The differential is homogeneous.

If we suppose that $\PP_{(0)}$ is reduced to $\KK.1$, then $\Sigma \tilde{\PP}_{(0)}=0$. Hence the elements $p_i$ which occur in the treewise tensors of $B(\PP)$ have a weight larger than $1$. As a consequence, we have $B_d(\PP)_{(s)}=0$ if $d>s$.

\subsection{Koszul operads}\label{Koszul}
We work with the definition given by Fresse in \cite{Benoit}. It generalizes the original definition by Ginzburg and Kapranov in \cite{GK} for operads with are not generated by binary operations.

Ones says that a (connected, graded, equipped with weight) operad $\PP$ is Koszul if $H_*(B_*(\PP)_{(s)})=0$ for $* \neq s$ (in words if the homology of its bar construction is concentrated on the diagonal $*=s$).

The {\it Koszul construction} is defined by
$$K(\PP)_{(s)}=H_s(B_*(\PP)_{(s)}, \delta)= \text{ker} (\delta : B_s(\PP)_{(s)} \fdr B_{s-1}(\PP)_{(s)} ) .$$
From the definition, $K(\PP)_{(s)}$ is concentrated in degree $s$. We observe that the inclusion $K_d(\PP)_{(s)} \fdr B_d(\PP)_{(s)}$ is a morphism of complexes. The operad $\PP$ is Koszul if and only if the inclusion morphism $K(\PP) \fdr B(\PP)$ is a quasi-isomorphism.

\section{The language of trees}
Trees allow us to represent graphically the elements of the free operad and of the bar construction. The goal of this section is to define the conventions used throughout the article to describe the structure of a tree.
\vspace{0.3cm}

\subsection{Vertices and edges}\label{defarbres}
An {\it n-tree} is an abstract oriented tree together with one {\it outgoing edge} (the root of the tree) and $n$ {\it ingoing edges} (the entries of the tree) indexed by the set $\left\{1, \ldots, n\right\}$.
Formally, an $n$-tree $\tau$ is determined by a set of {\it vertices } $V(\tau)$ and by a set of {\it edges } $e \in E(\tau)$ oriented from a source $s(e) \in V(\tau) \coprod \left\{1, \ldots, n\right\}$ to a target $t(e) \in V(\tau) \coprod \left\{0\right\}$, with the following conditions :
\begin{enumerate}
	\item There is a unique edge $e \in  E(\tau)$ such that $t(e)=0$. We call this edge the {\it root}.
	\item For every vertex $v \in  V(\tau)$, there is a unique $e \in  E(\tau)$ such that $s(e) = v$.
	\item For every $i \in \left\{1, \ldots, n\right\}$, there is a unique $e$ such that $s(e)=i$. This edge is the $i$th entry of the tree.
	\item For every vertex $v$, there is a sequence of edges $e_1, \ldots, e_n$ such that $s(e_1)=v, t(e_i)=s(e_{i+1})$ for every $i \in \left[ 1,n-1 \right]$ and $t(e_n)= 0$.
\end{enumerate}

These conditions imply that the set $V(\tau) \coprod \left\{1, \ldots, n\right\}$ is equipped with a partial order so that $s(e)>t(e)$ for any edge $e$. The minimum of the order is $0$. There is an associated partial order on edges.

The set $E'(\tau)$ of {\it internal edges} is the set $E(\tau)$ of edges minus the ingoing edges and the outgoing edge.

We call a {\it leaf} the source of an ingoing edge. We draw
trees with leaves on top and the root at the bottom.

We say that a leaf $i$ is {\it linked to a vertex $v$} if there is a monotonic
path of edges between $i$ and $v$. We assume also that a leaf $i$ is linked to itself.

We define the entries of the vertex $v$ by $I_v = \left\{s(e),  e \in E(\tau) \textrm{ such that } t(e)=v \right\}$.

Then a tree structure is determined by a partition of the set $V(\tau) \coprod \left\{1, \ldots, n\right\}$ of the form $\coprod_{v \in V(\tau) \coprod \left\{0\right\}} I_v$.
\vspace{0.3cm}
 
\subsection{Tree isomorphisms}
An {\it isomorphism of  $n$-trees} $f : \tau \fdr \tau'$ is defined by two bijections
$$f_V : V(\tau) \fdr V(\tau') \textrm{ and } f_E : E(\tau) \fdr E(\tau') $$
which preserve the structure of the tree (the source and target of every edge).
We can extend $f_V$ by the identity on $\left\{1, \ldots, n\right\}$ to have the relation $I_{f_V (v)} = f_V (I_v)$ for every $v \in V(\tau) \coprod \left\{0\right\}$. The $n$-trees and their isomorphisms define a category.
\vspace{0.3cm}

\subsection{The $\Sigma_*$-category of trees}
Let $T(n)$ be the category defined by the $n$-trees and their isomorphisms. This category has a weight splitting : 
$$T(n) = \coprod_{r=0}^{\infty}T_{(r)}(n),$$
where $T_{(r)}(n)$ is the category formed by trees with $r$ vertices.

We can generalize the construction of $T(n)$ by indexing the entries of an $n$-tree by a set $I= \left\{i_1, \ldots, i_n\right\}$ of $n$ elements.
We obtain the category $T(I)$ of $I$-trees. A bijection $u : I \fdr I'$ induces a functor $u_* : T(I) \fdr T(I')$ such that 
$u_*(T_{(r)}(I)) \subset T_{(r)}(I')$.

A permutation $w \in \Sigma_n$ induces a functor from the category of $n$-trees to itself. Hence the symmetric group acts on $n$-trees.
\vspace{0.3cm}

\subsection{Subtrees}\label{sousarbre}

A {\it subtree} $\sigma$ of a tree $\tau$ is a tree determined by subsets 
$V(\sigma) \subset V(\tau)$ and $E(\sigma) \subset E(\tau)$ such that
$$v \in V(\sigma) \Longleftrightarrow \forall e \in E(\tau), (e \in E(\sigma) \Leftrightarrow s(e)=v \text{ or } t(e)=v).$$
The source and the target of an internal edge $e$ in $\sigma$ are
the source and the target of $e$ in $\tau$. A leaf $s(e)$ in $\sigma$ is
labelled by the minimum of the leaves which are linked to $s(e)$ in $\tau$.

Graphically, a subtree corresponds to a connected part of the graph of the tree.

The subtree $\sigma$ of a tree $\tau$ generated by an edge $e \in E'(\tau)$ is
the tree $\tau_e$ such that $V(\tau_e)=\{s(e),t(e)\}$ and
$E'(\tau_e)=\{e\}$. The ingoing edges relative to $s(e)$ and $t(e)$ are kept,
and the outgoing edge is linked to $t(e)$. The leaves are labelled as
specified above.
\vspace{0.3cm}

\subsection{The operad of trees}
One equips the sequence of categories $T(n)$ with the structure of an operad.
The partial composition product
$$ \circ_i : T_{(r)}(m) \x T_{(s)}(n) \fdr T_{(r+s)}(m+n-1)$$ 
is defined as follows : For $\sigma \in T_{(r)}(m)$ and $\tau \in T_{(s)}(n)$, the composite tree $\sigma \circ_i \tau$ is obtained by grafting the root of $\tau$ to the $i$th entry of $\sigma$ (\textit{cf}. figure 1 in the appendix at the end of the article).
\vspace{0.3cm}

\subsection{The module of treewise tensors}
Let $M$ be a $\Sigma_*$-module. A {\it module of treewise tensors} $\tau(M)$ is associated to any tree $\tau$.

Let $v$ be a vertex of $\tau$. Call $n_v$ the cardinal of $I_v$. Let $M(I_v)$ be the $\KK$-module generated by tensors
$f \t_{\Sigma_{n_v}} x_v$ where $x_v \in M(n_v)$ and $f$ is a bijection of the entries $\left\{1, \ldots, n \right\}$ to the entries of $x_v$.
One sets
$$\tau(M) = \bigotimes_{v \in V(\tau)} M(I_v).$$

Observe that this construction is functorial in $\tau$ : an isomorphism of trees $f : \tau \fdr \tau'$ induces a morphism $f_* : \tau(M) \fdr \tau'(M)$.

In practice, we see a treewise tensor as a tree with vertices labelled by elements of $M$, or equivalently a tensor product arranged on a tree.

Recall that a tree $\tau$ is called a corolla if it has only one vertex. For a corolla, we have an identification $\tau(M) \cong M(n)$ where $n$ is the number of entries of $\tau$.

\vspace{0.3cm}

\subsection{The free operad}\label{operadelibre}
The free operad has an explicit expansion so that

$$F(M)(n)= \bigoplus_{\tau \in T(n)} \tau(M) / \cong.$$

In $F(M)$, the relation $\cong$ identifies treewise tensors which correspond to each other by an isomorphism. Explicitly, for $x \in \tau(M)$ and $x' \in \tau'(M)$, we have $x' \cong x$ if and only if $x'=f_* x$ for an isomorphism $f : \tau \fdr \tau'$.

In this representation, the weight grading of the free operad defined in section \ref{poidsoperadelib} is given by the number of vertices of the tree.
\vspace{0.3cm}

\subsection{Construction without quotient}\label{sansquotient}

Throughout the paper, we work with trees (called {\it reduced}) verifying $I_v \neq \emptyset$ for every vertex $v$. A reduced tree has no automorphism except the identity. If $M(0)=0$, then the free operad involves only treewise tensors $x \in \tau(M)$ where $\tau$ is reduced. An operad is called {\it reduced} if it is spanned by treewise tensors on reduced trees.

\vspace{0.3cm}

We are going to use that a reduced tree has a canonical planar representation. This representation is determined by an ordering of the entries of each vertex $v$.

We determine an order on $I_{v}$ in the following way : 
\begin{enumerate}
	\item To every $v'$ in $I_{v}$, we associate the minimum of the leaves linked to $v'$.
	\item We place the vertices $v'$ and the leaves directly linked to $v$ from left to right above $v$ in ascending order.
\end{enumerate}

The order gives a bijection between $\{1,\ldots,n_v\}$ and the
entries of $v$. This bijection gives an isomorphim $M(I_v)\simeq M(n_v)$,
for each $v\in V(\tau)$. As a consequence, for the module of treewise
tensors $\tau(M)$, we obtain $\tau(M)\simeq\bigotimes_{v\in V(\tau)}
M(n_v)$.

To obtain a canonical representation of elements of the free operad, we
fix also a set $T'(n)$ of representatives of isomorphism classes of
$n$-trees. The expansion of the free operad gives then:
$$F(M)(n)\simeq\bigoplus_{\tau \in T'(n)} \tau(M)
\simeq\bigoplus_{\tau \in T'(n)} \bigl\{\bigotimes_{v\in V(\tau)}
M(n_v)\bigr\}.$$

\section{The Poincar\'e-Birkhoff-Witt criterion}

The aim of this section is to give the PBW criterion. We define the notion of a PBW basis for an operad, generalizing what Priddy did in the case of the algebras (\textit{cf}. \cite{Prid}).

\vspace{0.3cm}

\subsection{A basis of treewise tensors and of the free operad}\label{defbasetauM}
Let $M$ be a $\Sigma_*$-module, with an ordered basis $B^M$ (as a $\KK$-module) and such that $M(0)=0$. For every tree $\tau$, we define a {\it monomial basis} $B^{F(M)}_\tau$ of $\tau(M)$ in the following way. We use the planar representation of $\tau$, giving an isomorphism $\tau(M) \cong \bigotimes_v M(n_v)$. An element $\bigotimes_v m_v$ belongs to $B^{F(M)}_\tau$ if and only if each $m_v$ is in $B^M$. We set $B^{F(M)} = \coprod_\tau B^{F(M)}_\tau$.

A {\it pointed shuffle of a composition $\alpha \circ_i \beta$ } is a permutation preserving the order of the entries of each treewise tensor in the partial composition product and preserving the entry $i$. More explicitely, for $\alpha$ a treewise tensor with $s$ entries and $\beta$ a treewise tensor with $t$ entries, a permutation $w \in \Sigma_{s+t-1}$ is a pointed shuffle if the orders of the entries of $\alpha$ and of $\beta$ are the same as in the composition $w. \alpha \circ_i \beta$ and if the minimum of the entries of $\beta$ in the composition is $i$. This definition implies that the entries labelled $1$ to $i-1$ are not modified.

\begin{observ}\label{baseuniq}
The basis $B^{F(M)}$ is the only basis such that 

\begin{itemize}
	\item $B^{F(M)}_\tau=B^{M(n)}$ if $\tau$ is a corolla with  $n$ entries.
	\item For all $\alpha \in \sigma(M), \beta \in \tau(M)$ treewise tensors and $w$ pointed shuffle, we have : 
$$w. \alpha \circ_i \beta \in B^{F(M)}_{w. \sigma \circ_i \tau} \Leftrightarrow \alpha \in B^{F(M)}_\sigma \text { and } \beta \in B^{F(M)}_\tau.$$
\end{itemize}
\end{observ}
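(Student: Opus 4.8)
The plan is to prove the two assertions bundled in the statement: first that $B^{F(M)}$ itself satisfies the two bullet points (\emph{existence}), and then that these two properties pin the basis down completely (\emph{uniqueness}). Throughout I consider bases made of treewise tensors, and both halves rest on one combinatorial fact: the canonical planar representation used to identify $\tau(M)\cong\bigotimes_{v\in V(\tau)}M(n_v)$ is compatible with grafting.

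\emph{Existence.} The corolla case is immediate: if $\tau$ is the $n$-corolla then $V(\tau)$ is a single vertex, $\tau(M)\cong M(n)$, and $\bigotimes_v m_v\in B^{F(M)}_\tau$ means precisely $m_v\in B^M\cap M(n)=B^{M(n)}$. For the composition property I would first observe that $V(w.\sigma\circ_i\tau)=V(\sigma)\coprod V(\tau)$, so a treewise tensor on $w.\sigma\circ_i\tau$ carries exactly one factor for each vertex of $\sigma$ and one for each vertex of $\tau$. The key step is to check that, because $w$ is a pointed shuffle, the order on $I_v$ fixed by the canonical planar representation of $w.\sigma\circ_i\tau$ restricts, at every vertex, to the order used in $\sigma$ (respectively in $\tau$). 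Indeed the leaf labels of $\sigma$ and of $\tau$ keep their relative order under $w$, and the requirement that the minimum of the entries of $\beta$ equals $i$ guarantees that the branch carrying $\tau$ occupies, at each vertex of $\sigma$ below the grafting point, the same slot that the entry $i$ occupied in $\sigma$; since the ordering of the entries of a vertex depends only on the relative order of the minima of the linked leaves, nothing changes. Consequently the factorisation of $w.\alpha\circ_i\beta$ into $\bigotimes_v m_v$ is literally the concatenation of the factorisations of $\alpha$ and of $\beta$, with the individual factors unchanged. Hence every factor lies in $B^M$ if and only if this holds for $\alpha$ and for $\beta$ separately, which is the asserted equivalence.

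\emph{Uniqueness.} Let $B'=\coprod_\tau B'_\tau$ be any basis of treewise tensors satisfying the two properties. I claim that for every tree $\tau$ and every treewise tensor $x\in\tau(M)$ one has $x\in B'_\tau\Leftrightarrow x\in B^{F(M)}_\tau$, and I prove this by induction on the number $r$ of vertices of $\tau$. For $r=1$ this is exactly the first property, both sides reducing to $x\in B^{M(n)}$. For $r\ge 2$, choose a vertex $v$ of $\tau$ all of whose entries are leaves, let $\gamma$ be the corolla at $v$, and let $\sigma$ be the tree obtained by collapsing $\gamma$ to a single leaf; then $\sigma$ has $r-1$ vertices, and $\tau=w.(\sigma\circ_i\gamma)$ for a suitable entry $i$ and pointed shuffle $w$. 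The factors of $x$ split according to $V(\sigma)\coprod V(\gamma)=V(\tau)$, so $x=w.\alpha\circ_i\beta$ with $\alpha\in\sigma(M)$ and $\beta\in\gamma(M)$. Applying the second property to $B'$ and, separately, to $B^{F(M)}$ turns $x\in B'_\tau$ and $x\in B^{F(M)}_\tau$ into the conjunctions ``$\alpha\in B'_\sigma$ and $\beta\in B'_\gamma$'' and ``$\alpha\in B^{F(M)}_\sigma$ and $\beta\in B^{F(M)}_\gamma$'' respectively. By the induction hypothesis the corresponding conjuncts are equivalent, hence so are $x\in B'_\tau$ and $x\in B^{F(M)}_\tau$. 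Therefore $B'=B^{F(M)}$.

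I expect the genuine work to sit in the existence half, namely the verification that a pointed shuffle preserves the canonical planar order at every vertex so that the tensor factorisation is transported unchanged; this is exactly where the two defining conditions of a pointed shuffle (order preservation within each factor, and minimum of the entries of $\beta$ equal to $i$) are used. Once that is in place, uniqueness is a formal induction built on peeling off a top corolla.
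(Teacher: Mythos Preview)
Your proof is correct. The paper records this statement as an \emph{Observation} and offers no argument at all, treating both directions as immediate from the definitions of $B^{F(M)}$ and of a pointed shuffle; your write-up therefore supplies precisely the verification the paper omits. The structure you chose --- check that pointed shuffles preserve the canonical planar order at every vertex so that the tensor factors are transported unchanged, then run an induction on the number of vertices by peeling off a top corolla --- is the natural one and is implicitly what the authors have in mind. Your identification of where the content lies (the two defining conditions of a pointed shuffle are exactly what forces the minimum-leaf orderings on $I_v$ to agree before and after grafting) is accurate.
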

   
\subsection{Order on the basis of the treewise tensors}\label{defordre}
We are choosing an order on the monomial basis of $F(M)(r)$ for every $r$ in
$\NN$, verifying the compatibility condition :

For $\alpha, \alpha'$ with $m$ entries and $\beta,\beta'$ with $n$ entries, we have  \begin{equation} 
\left \{ \begin{array}{l} \alpha \leq \alpha' \\ \beta \leq \beta' \end{array} \right. \Rightarrow \forall i, w.\alpha \circ_i \beta \leq w.\alpha' \circ_i \beta', \  \forall w \text{ pointed shuffle}. 
\nonumber \end{equation}

\subsection{Example of a suitable order}\label{ordre}
Let $\alpha$ be a treewise tensor with $n$ entries.
We associate a sequence of $n$ words
$(\underline{a}_1,\underline{a}_2, \ldots,\underline{a}_n)$ to $\alpha$ in the
following way  : For all $i$, there
exists a unique monotonic path of vertices from the root to $i$, and $\underline{a}_i$
is the word composed (from left to right) of the labels of these vertices
(from bottom to top).

Recall that $M$ has an ordered basis. If $\underline a$ and $\underline b$ are two words, we first compare the length of the words ($\underline a < \underline b$ if $l(\underline a)<l(\underline b)$, where $l$ is the length)
and if they are equal, we compare them lexicographically (each letter being in
$M$). 

We can then compare two treewise tensors with the same number of entries $\alpha$ (associated to
$(\underline{a}_1,\underline{a}_2, \ldots,\underline{a}_n)$)  and $\beta$
(associated to  $(\underline{b}_1,\underline{b}_2, \ldots,\underline{b}_n)$), such that $\alpha \neq \beta$,
by comparing $\underline{a}_1$ with $\underline{b}_1$, then $\underline{a}_2$ with $\underline{b}_2$, etc. This defines the strict relation.

\begin{prop}
The order defined above verifies the compatibility condition of section \ref{defordre}.
\end{prop}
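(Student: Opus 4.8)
The plan is to reduce the compatibility condition to a monotonicity property of concatenation for the chosen order on words, after first making explicit how the word sequences transform under $\circ_i$ and a pointed shuffle.

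First I would record the transformation rule. Write the word sequence of $\alpha$ as $(\underline{a}_1, \ldots, \underline{a}_m)$ and that of $\beta$ as $(\underline{b}_1, \ldots, \underline{b}_n)$. In $w.\alpha \circ_i \beta$, the monotonic path from the root to an entry coming from $\alpha$ (an entry $\neq i$) never meets $\beta$, so its word is the corresponding $\underline{a}_k$; the path from the root to an entry coming from $\beta$ first runs through the vertices of $\alpha$ on the path to the grafted leaf $i$ and then through $\beta$, so, reading left-to-right as bottom-to-top, its word is the concatenation $\underline{a}_i \underline{b}_l$. The essential remark is that the partition of the $m+n-1$ positions into those coming from $\alpha$ and those coming from $\beta$, together with the order-preserving identifications of these positions with the entries of $\alpha$ and of $\beta$, depends only on $w$, $i$, $m$, $n$ and not on the vertex labels; hence it is literally the same for $w.\alpha \circ_i \beta$ and $w.\alpha' \circ_i \beta'$. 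The pointed-shuffle hypotheses then fix positions $1, \ldots, i-1$ as the $\alpha$-entries $1, \ldots, i-1$, and position $i$ as the smallest entry of $\beta$.

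The technical heart is a concatenation lemma for the order on words (compare length first, then lexicographically): if $\underline{x} \leq \underline{x}'$ and $\underline{y} \leq \underline{y}'$, then $\underline{x}\,\underline{y} \leq \underline{x}'\,\underline{y}'$, strictly whenever one hypothesis is strict. I would prove it by splitting on whether the total lengths $l(\underline{x}) + l(\underline{y})$ and $l(\underline{x}') + l(\underline{y}')$ differ: if they differ, the conclusion is immediate from the length comparison; if they agree, then necessarily $l(\underline{x}) = l(\underline{x}')$ and $l(\underline{y}) = l(\underline{y}')$, the comparison becomes purely lexicographic, and a first discrepancy in the $\underline{x}$-block, or failing that in the $\underline{y}$-block, yields the strict inequality.

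Finally, assuming $\alpha \leq \alpha'$ and $\beta \leq \beta'$, I would locate the first position $p_0$ at which the composite word sequences disagree and show the word of $w.\alpha \circ_i \beta$ there is strictly smaller, giving $w.\alpha \circ_i \beta \leq w.\alpha' \circ_i \beta'$. By the formula above, $p_0$ is governed by the first discrepancy index $k^*$ of $\alpha$ against $\alpha'$ and the first discrepancy index $l^*$ of $\beta$ against $\beta'$. When $k^* \neq i$ the discrepancy surfaces at a position coming from $\alpha$ (or, if $\alpha = \alpha'$, only positions coming from $\beta$ matter), and the conclusion follows either from $\underline{a}_{k^*} < \underline{a}'_{k^*}$ or from prepending the common prefix $\underline{a}_i = \underline{a}'_i$ to $\underline{b}_{l^*} < \underline{b}'_{l^*}$. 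The delicate case, which I expect to be the main obstacle, is $k^* = i$: the first discrepancy sits at the grafted leaf, so $\underline{a}_i$ is consumed by the composition and never appears as a standalone composite word, yet $\underline{a}_i < \underline{a}'_i$ propagates as a prefix into every word coming from $\beta$. Here I must invoke the concatenation lemma, in the form that $\underline{a}_i < \underline{a}'_i$ together with $\underline{b}_1 \leq \underline{b}'_1$ (which follows from $\beta \leq \beta'$) forces $\underline{a}_i \underline{b}_1 < \underline{a}'_i \underline{b}'_1$, so that the discrepancy in the prefix dominates whatever happens in the suffix coming from $\beta$. The length-first convention is exactly what makes this work, and reconciling the length and lexicographic bookkeeping at the grafting point is where the real content lies.
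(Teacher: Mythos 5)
Your proposal is correct and follows essentially the same route as the paper: describe the word sequence of the composite (with $\underline{a}_i\underline{b}_l$ at the grafted entries), use that lengths add under concatenation so that the length-first order is monotone for concatenation, and then compare the sequences lexicographically, handling the grafting position via $\underline{a}_i < \underline{a}'_i \Rightarrow \underline{a}_i\underline{b}_1 < \underline{a}'_i\underline{b}'_1$. Your only organizational difference is that you absorb the pointed shuffle from the start by observing that the partition of positions is independent of the labels, whereas the paper first treats the identity shuffle and then argues separately that a shuffle preserves the order within each block; both are fine.
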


\begin{proof}
Let $\alpha$ and $\alpha'$ be treewise tensors with $n$ entries, such that $\alpha \leq \alpha'$.
Let $\beta$ and $\beta'$ be treewise tensors with $m$ entries, such that $\beta \leq \beta'$.

Let $(\underline{a}_1,\underline{a}_2, \ldots,\underline{a}_n)$, resp. $(\underline{a}'_1,\underline{a}'_2, \ldots,\underline{a}'_n)$, be the word sequence associated to $\alpha$, resp. $\alpha'$.
Let $(\underline{b}_1,\underline{b}_2, \ldots,\underline{b}_m)$, resp. $(\underline{b}'_1,\underline{b}'_2, \ldots,\underline{b}'_m)$, be the word sequence associated to $\beta$, resp. $\beta'$.

The word sequence associated to the composite $\alpha \circ_i \beta$  has the form
$(\underline{a}_1,\underline{a}_2,
\ldots,\underline{a}_i \underline{b}_1, \underline{a}_i \underline{b}_2,\ldots,
\underline{a}_i \underline {b}_m, \underline{a}_{i+1}, \ldots,
\underline{a}_n)$  where $\underline{a}_i\underline {b}_j$ is the concatenation of $\underline{a}_i$ 
and $\underline{b}_j$.
Similarly, the word sequence $(\underline{a}'_1,\underline{a}'_2,
\ldots,\underline{a}'_i \underline{b}'_1, \underline{a}'_i \underline{b}'_2,\ldots,
\underline{a}'_i\underline  b'_m, \underline{a}'_{i+1}, \ldots,
\underline{a}'_n)$ is associated to $\alpha' \circ_i \beta'$ . 

To begin with, note that the length of $\underline {a}_i \underline {b}_j$ is the sum of the
length of $\underline {a}_i$ and $\underline {b}_j$.

We compare $\alpha \circ_i \beta$ and $\alpha' \circ_i \beta'$ as they
have both $n+m-1$ entries.
As $\alpha \leq \alpha'$, we have  $(\underline a_1,\underline a_2,
\ldots,\underline a_{i-1}) \leq (\underline a'_1,\underline a'_2,
\ldots,\underline a'_{i-1})$. If the inequality is strict, we are done, as our order is lexicographical in the sequence.
If the inequality is an equality, we look at $\underline a_i$ and $\underline a'_i$. If $\underline a_i < \underline a'_i$, then $\underline a_i
\underline b_1 < \underline a'_i \underline b'_1$. If $\underline a_i=\underline a'_i$, comparing $\underline a_i \underline b_j$
with $\underline a'_i \underline b'_j$ is the same as comparing
$\underline b'_j$ with $\underline b'_j$ for all $j$. 
So $\underline a_i \underline b_1, \underline a_i \underline b_2,\ldots,
\underline a_i \underline  b_m \leq \underline a'_i \underline b'_1, \underline a'_i  \underline b'_2 ,\ldots,
\underline a'_i \underline  b'_m $. If the inequality is strict, we are done, else we have to look at the remainder of the sequence.
As $\alpha \leq \alpha'$ and $(\underline a_1 ,\underline a_2 ,
\ldots,\underline a_i ) = (\underline a'_1 ,\underline a'_2 ,
\ldots,\underline a'_i )$, we have  $(\underline a_{i+1} , \ldots,
\underline a_n ) \leq (\underline a'_{i+1} , \ldots,
\underline a'_n )$.

Finally we have  $\alpha \circ_i \beta
\leq \alpha' \circ_i \beta'$.

To show that $w.\alpha \circ_i \beta
\leq w.\alpha' \circ_i \beta'$ for all pointed shuffles $w$, we see how the pointed
shuffles act on the sequence of words associated to a composition of treewise
tensors.
The pointed shuffles will induce a shuffle (in the usual meaning) between the
set composed of the $\underline a_i \underline b_j$ for all $j$ and the set composed of
 the $\underline a_j $ for $j>i$. 

A shuffle preserves the order among each set it acts on and the order we have defined on the treewise tensors look
at the associated words recursively.
As a consequence, the order between $w.\alpha \circ_i \beta$ and $w.\alpha' \circ_i
\beta'$ will be the same as the one between $\alpha \circ_i \beta$ and $\alpha' \circ_i \beta'$.
\end{proof}

\subsection*{Remark} We call this order the {\it lexicographical order}. Another suitable order, the
{\it reverse-length lexicographical order}, can be defined in a similar way, 
If $\underline a$ and $\underline b$ are two words, we first compare the length of the words ($\underline a > \underline b$ if $l(\underline a)<l(\underline b)$, where $l$ is the length) and if they are equal, 
we compare them lexicographically (each letter being in $M$). 
The proof of the compatibility condition of section \ref{defordre} is the same.

\vspace{0.3cm}

\subsection{Restriction of a treewise tensor to a subtree}
Let $\alpha=\bigotimes_{v \in \tau} m_v$  be a treewise tensor.
The restriction of  $\alpha$ to a subtree $\sigma$ of $\tau$ is the tensor $\alpha_{|\sigma} =\bigotimes_{v \in V(\sigma)} m_v$ which gives an element of $\sigma(M)$.

We use this notion for a subtree $\sigma = \tau_e$ generated by an edge $e$ (defined in section \ref{sousarbre}).

\subsection{Poincar\'e-Birkhoff-Witt basis}

Let $\PP$ be a reduced operad defined by $F(M)/(\overline{R})$. 
A {\it PBW basis} for $\PP$ is a set $B^\PP \subset B^{F(M)}$ of elements representing a basis of the $\KK$-module $\PP$, containing $1$, $B^M$ and for every $\tau$ a subset $B_\tau^\PP$ of $B_\tau^{F(M)}$, verifying the conditions :

\begin{enumerate}
	\item For $\alpha \in B_\sigma^\PP$, $\beta \in B_\tau^\PP$ and $w$ a pointed shuffle, either $w. \alpha \circ_i \beta$ is in $B_{w. \sigma \circ_i \tau}^\PP$, or the elements of the basis $\gamma \in B^\PP$ which appear in the unique decomposition  $w. \alpha \circ_i \beta \equiv \Sigma_\gamma c_\gamma \gamma$,  verify $\gamma > w. \alpha \circ_i \beta$ in $F(M)$.
	\item A treewise tensor $\alpha$ is in $B_\tau^\PP$ if and only if for every internal edge $e$ of $\tau$, the restricted treewise tensor $\alpha_{|\tau_e}$ is in $B^\PP_{\tau_e}$.	
\end{enumerate}

\subsection*{Remark}
 This definition generalizes Priddy's definition for algebras (\textit{cf}. \cite{Prid}). Recall that an algebra $A$ is equivalent to an operad $\PP_A$ such that
$\PP_A(r) = \left \{ \begin{array}{rl} A, & \text{for $r=1$,} \\ 0, & \text{otherwise.} \end{array} \right. $

The algebra $A$ has a PBW basis in Priddy's sense
if and only if the operad $\PP_A$ has a PBW basis in our sense.

\begin{observ}
 Condition 1 is equivalent to condition 1' : 
\begin{itemize}
 \item[(1')] For $\alpha$ in $B^{F(M)}$, either $\alpha \in B^\PP$, or the elements of the basis $\gamma \in B^\PP$ which appear in the unique decomposition  $\alpha \equiv \Sigma_\gamma c_\gamma \gamma$,  verify $\gamma > \alpha$ in $F(M)$.

\end{itemize}

\end{observ}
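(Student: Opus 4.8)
The plan is to prove the equivalence of Condition 1 and Condition (1') by a double implication, leveraging the recursive structure encoded in Condition 2 and the compatibility of the order from section \ref{defordre}.

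First I would show that Condition 1 implies (1'). Given $\alpha \in B^{F(M)}_\tau$, I would induct on the number of vertices (the weight) of $\tau$. If $\tau$ is a corolla, then $\alpha \in B^M \subset B^\PP$ by definition, so (1') holds trivially. Otherwise, pick an internal edge and write $\tau$ as a pointed-shuffle composition $\tau = w.\sigma \circ_i \tau'$ with each factor having strictly fewer vertices; correspondingly $\alpha = w.\alpha_1 \circ_i \alpha_2$. By the inductive hypothesis applied to $\alpha_1$ and $\alpha_2$, each either lies in $B^\PP$ or rewrites into strictly larger basis elements. If both lie in $B^\PP$, Condition 1 gives directly that $\alpha \in B^\PP$ or rewrites into strictly larger elements. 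If one of them, say $\alpha_1$, rewrites as $\sum_\gamma c_\gamma \gamma$ with each $\gamma > \alpha_1$, I would substitute this expansion and use the compatibility condition of section \ref{defordre} to conclude that $w.\gamma \circ_i \alpha_2 > w.\alpha_1 \circ_i \alpha_2 = \alpha$, reducing to the already-handled case of basis factors via distributivity of the composition product.

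For the converse, that (1') implies Condition 1, I would take $\alpha \in B^\PP_\sigma$ and $\beta \in B^\PP_\tau$ and simply apply (1') to the treewise tensor $w.\alpha \circ_i \beta \in B^{F(M)}_{w.\sigma\circ_i\tau}$: either it is in $B^\PP$, or it decomposes into strictly larger basis elements, which is exactly the dichotomy asserted in Condition 1. This direction is essentially immediate once one observes that $w.\alpha \circ_i \beta$ is indeed a legitimate element of the monomial basis $B^{F(M)}$, which follows from Observation \ref{baseuniq}.

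The main obstacle I expect is the forward direction, specifically the bookkeeping when one factor already needs rewriting: I must ensure that after expanding $\alpha_1 = \sum_\gamma c_\gamma \gamma$ the resulting monomials $w.\gamma \circ_i \alpha_2$ are again genuine monomial basis elements (so that Condition 1 or a further inductive step applies to them) and that the strict inequalities propagate correctly through the composition. The compatibility condition only guarantees a non-strict inequality $w.\alpha_1\circ_i\alpha_2 \leq w.\gamma\circ_i\alpha_2$ in general, so I would need to verify that strictness on the factor level transfers to strictness on the composite — this requires a careful reading of the explicit lexicographical order, where a strict gain in some word $\underline{a}_k$ survives concatenation and shuffling. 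Handling the transitivity of ``$>$'' across iterated rewrites, so that the final basis elements dominate the original $\alpha$, is where the argument must be assembled with care.
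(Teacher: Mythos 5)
Your argument is correct and is essentially the paper's own (very terse) proof: the direction $(1')\Rightarrow(1)$ is immediate because $w.\alpha\circ_i\beta$ lies in $B^{F(M)}$ by Observation \ref{baseuniq}, and $(1)\Rightarrow(1')$ goes by induction on the number of vertices via the decomposition $\alpha=w.\alpha_1\circ_i\alpha_2$, rewriting the factors into $B^\PP$ and then applying Condition 1 to the resulting compositions. The strictness issue you flag at the end is not a genuine obstacle: when $\alpha_1\neq\gamma$ the composites $w.\alpha_1\circ_i\alpha_2$ and $w.\gamma\circ_i\alpha_2$ are distinct elements of the monomial basis, so the non-strict inequality from the compatibility condition is automatically strict, and in any case a term equal to $\alpha$ surviving in the unique decomposition would force $\alpha\in B^\PP$, putting you in the first branch of the dichotomy.
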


\begin{proof}
 Condition 1' implies obviously condition 1. For the conserve direction, we use an induction on the number of vertices in $\alpha$ in $B^{F(M)}$ and observation \ref{baseuniq}.
\end{proof}

\begin{prop}\label{condquad}
Assume that $M$ is finitely generated.
If condition 1 is verified when $\alpha$ and $\beta$ are corollas, and condition 2 is verified, then condition 1 is true for all $\alpha$ and $\beta$.
\end{prop}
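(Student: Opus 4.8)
The plan is to prove the statement in the equivalent form given by condition $1'$ (the preceding observation), namely that every monomial $\alpha \in B^{F(M)}$ either lies in $B^\PP$ or reduces, modulo $(\overline R)$, to a combination of strictly larger elements of $B^\PP$. First I fix an arity $n$ and a weight $r$ (the number of vertices). Since $M$ is finitely generated, there are only finitely many reduced $n$-trees with $r$ vertices and finitely many ways to label their vertices by elements of $B^M$, so the set $B^{F(M)}_{(r)}(n)$ of monomials of arity $n$ and weight $r$ is finite. As the relations $\overline R \subset F_{(2)}(M)$ are homogeneous in arity and in weight, reducing a monomial modulo $(\overline R)$ only produces monomials of the same arity $n$ and the same weight $r$. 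The whole argument can therefore be organised as a downward strong induction on the position of $\alpha$ in this finite ordered set: I assume condition $1'$ for every monomial strictly larger than $\alpha$ and establish it for $\alpha$.

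For the inductive step, suppose $\alpha \in B^{F(M)}_\tau \setminus B^\PP$. By condition 2 there is an internal edge $e$ of $\tau$ for which the restriction $\alpha_{|\tau_e}$ (section \ref{sousarbre}) fails to lie in $B^\PP_{\tau_e}$. The subtree $\tau_e$ has exactly two vertices, so $\alpha_{|\tau_e}$ is, up to relabelling of its leaves, a pointed shuffle $w.a \circ_j b$ of two corollas from $B^M$ (Observation \ref{baseuniq}). The hypothesis that condition 1 holds for corollas --- equivalently condition $1'$ for two-vertex monomials --- now yields a reduction $\alpha_{|\tau_e} \equiv \sum_\delta c_\delta\, \delta$ in which every $\delta \in B^\PP$ is a two-vertex basis element with $\delta > \alpha_{|\tau_e}$.

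The key step is to localise this reduction inside $\alpha$. The tree $\tau$ is obtained from $\tau_e$ by grafting the remaining subtrees of $\tau$ onto the leaves and the root of $\tau_e$, so $\alpha$ is an iterated pointed-shuffle partial composition of $\alpha_{|\tau_e}$ with treewise tensors built from the other vertices. Since $(\overline R)$ is spanned by trees carrying one vertex labelled by $\overline R$, grafting these fixed surrounding tensors onto the congruence $\alpha_{|\tau_e} \equiv \sum_\delta c_\delta \delta$ produces a congruence $\alpha \equiv \sum_\delta c_\delta\, \alpha^\delta$ modulo $(\overline R)$, where $\alpha^\delta \in B^{F(M)}_{(r)}(n)$ results from performing the same graftings on $\delta$. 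Applying the compatibility condition of section \ref{defordre} to each partial composition, with the surrounding tensors held fixed, gives $\alpha^\delta \geq \alpha$; and $\alpha^\delta \neq \alpha$, because restricting $\alpha^\delta$ to the subtree generated by $e$ recovers $\delta \neq \alpha_{|\tau_e}$. Hence $\alpha^\delta > \alpha$ for every $\delta$. By the induction hypothesis each $\alpha^\delta$ either lies in $B^\PP$ or reduces to basis elements that are larger still, so substituting gives an expansion of $\alpha$ in $B^\PP$ all of whose terms are strictly greater than $\alpha$, which is condition $1'$ for $\alpha$. This also covers the base of the induction: a maximal element of $B^{F(M)}_{(r)}(n)$ cannot admit strictly larger terms, hence must already belong to $B^\PP$.

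The main obstacle I anticipate is precisely this localisation step: making rigorous that the two-vertex relation applied to $\alpha_{|\tau_e}$ lifts to a genuine congruence of $\alpha$ in $\PP$ preserving arity and weight, and upgrading the inequality $\alpha^\delta \geq \alpha$ supplied by the (non-strict) compatibility condition to a strict one through the restriction/distinctness argument. The role of the finite generation of $M$ is to ensure that $B^{F(M)}_{(r)}(n)$ is finite, so that the passage to strictly larger elements terminates and the downward induction is well-founded.
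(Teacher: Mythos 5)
Your proposal is correct and follows essentially the same route as the paper's proof: pass to condition $1'$, use condition 2 to locate a bad internal edge, reduce the two-vertex restriction there, lift the reduction through the surrounding graftings using the compatibility of the order, and invoke finiteness of the monomial basis in each arity and weight (guaranteed by the finite generation of $M$) to terminate. Your downward induction on the finite ordered set is just a repackaging of the paper's iterate-until-it-stops argument, with the added (welcome) care of upgrading the non-strict inequality from the compatibility condition to a strict one.
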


\begin{proof}
Equivalently, we can say : 

Let $M$ be finitely generated. If condition 1' is verified when $\alpha$ has only one internal edge and condition 2 is verified, then condition 1' is true for all $\alpha$.

We prove this equivalent proposition.

Let $\alpha$ be in $B_\tau^{F(M)} \setminus B_\tau^\PP$.
Condition 2 implies that there exists an internal edge $e$ such that $\alpha_{|\tau_e} \notin B^\PP_{\tau_e}$.
By condition 1', we can write $\alpha_{|\tau_e} \equiv \Sigma_\gamma c_\gamma \gamma$, where $\gamma > \alpha_{|\tau_e}$ in $F(M)$.
We replace $\alpha_{|\tau_e}$ by $\Sigma_\gamma c_\gamma \gamma$ in $\alpha$. This gives another representative of $\alpha \equiv \Sigma_{\gamma'} c_{\gamma'} \gamma'$ such that $\gamma' > \alpha$ (because the order is compatible with the partial composition product). If all $\gamma'$ are in $B^\PP$, we're done. Otherwise, we iterate the method.

We get others representative of $\alpha$ as sums of treewise tensors, each time strictly
larger. As the number of trees with a specified number of entries is finite and as
the basis of $M$ is also finite, then the number of treewise tensors with a
specified number of entries is also finite. So the process stops after a
finite number of steps, and the treewise tensors we get at the end are in $B^\PP$.
\end{proof}

\begin{theo}\label{thpbw}
A reduced operad which has a PBW basis is Koszul.
\end{theo}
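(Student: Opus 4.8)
The plan is to compute the homology of $B(\PP)_{(s)}$ by filtering it according to the leading monomial furnished by the PBW basis, and to show that the associated graded complex has homology concentrated on the diagonal $*=s$; the definition of a Koszul operad recalled in \ref{Koszul} then yields the conclusion. First I would use the PBW basis to fix a convenient basis of $B(\PP)$. A generator of the bar construction is an outer tree whose vertices are labelled by elements of $\tilde{\PP}$; writing each label in the basis $B^\PP$ and grafting the corresponding treewise tensors along the outer tree produces, by the multiplicativity of the monomial basis (Observation \ref{baseuniq}), a single monomial $T\in B^{F(M)}$ of weight $s$ together with a distinguished set $S$ of internal edges of $T$ (the edges of the outer tree, that is, the ``cuts''). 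The requirement that each vertex label lie in $B^\PP$ translates, via condition 2, into the condition that every kept edge $e\in E'(T)\setminus S$ be \emph{good}, meaning $T_{|\tau_e}\in B^\PP_{\tau_e}$, while the cut edges are unconstrained; in particular every \emph{bad} edge (one with $T_{|\tau_e}\notin B^\PP_{\tau_e}$) must lie in $S$. Thus a basis element is a pair $(T,S)$ with $\mathrm{Bad}(T)\subseteq S\subseteq E'(T)$, sitting in homological degree $d=|S|+1$.

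Next I would filter $B(\PP)_{(s)}$ by the monomial $T$, ordered by the compatible order of \ref{defordre}, and identify the induced differential on the associated graded. The bar differential contracts an edge $e\in S$, replacing the two pieces it separates by their partial composite. If $e$ is good, condition 2 guarantees that the merged piece is again in $B^\PP$, so the underlying monomial stays equal to $T$ and the term survives as $\pm(T,S\setminus e)$. If $e$ is bad, condition 1 (equivalently condition 1') rewrites the composite as a combination of strictly larger basis elements, and since the order is compatible with partial composition this raises the leading monomial of the whole tree; such terms therefore vanish in the associated graded. Hence the associated graded differential keeps only contractions along good edges, and the filtration splits it as a direct sum over the monomials $T$.

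The homology of the piece attached to a fixed $T$ is the main computation. Its generators are the subsets $S'$ of the set of good edges of $T$ (the bad edges being always cut), placed in homological degree $|\mathrm{Bad}(T)|+|S'|+1$, with differential deleting one good edge at a time. This is acyclic whenever $T$ has at least one good edge, an explicit contracting homotopy being adjunction of a fixed good edge; and it reduces to a single copy of $\KK$ in degree $d=s$ when $T$ has no good edge, that is, when every internal-edge restriction of $T$ is a leading term of a relation. Consequently the $E^1$-page of the spectral sequence of the filtration is concentrated on the diagonal $*=s$. Since $B_\bullet(\PP)_{(s)}$ is finite dimensional in each weight, the filtration is bounded and the spectral sequence converges; as each higher differential lowers the homological degree by one while the whole $E^1$-page lies in degree $s$, every higher differential vanishes and the sequence degenerates. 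Therefore $H_*(B(\PP)_{(s)})=0$ for $*\neq s$, so $\PP$ is Koszul.

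I expect the principal obstacle to be the associated graded differential of the second paragraph, made precise with signs: one must check carefully that contracting a bad edge strictly raises the leading monomial, by combining condition 1 with the order-compatibility of \ref{defordre} applied to a composition taking place inside a larger tree, and that contracting a good edge preserves it by condition 2; and then that the per-$T$ complex really is the claimed deletion complex, for which the homotopy yields acyclicity. The remaining points, namely the identification of bar generators with pairs $(T,S)$ and the convergence of the spectral sequence, are routine once the finiteness in each weight is invoked as in Proposition \ref{condquad}.
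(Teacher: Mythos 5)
Your proof is correct and follows essentially the same route as the paper: the paper likewise identifies bar generators with pairs (underlying monomial, set of composed versus cut edges), filters $B(\PP)$ by the underlying monomial using the compatible order of \ref{defordre}, uses conditions 1 and 2 to show that the associated graded complex attached to a fixed monomial $\lambda$ is the (augmented) simplex complex on its set of admissible edges --- acyclic when $Adm_\lambda\neq\emptyset$ and a single generator in degree equal to the weight when $Adm_\lambda=\emptyset$ --- and concludes by degeneration of the spectral sequence. The only differences are cosmetic: the paper's $S$ records the composed (marking) edges rather than the cuts, and the per-monomial acyclicity is phrased as the dual of the augmented chain complex of a simplex rather than via your explicit cone homotopy.
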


The proof of this statement is achieved in the next section.

\section{Proof of the Poincar\'e-Birkhoff-Witt criterion}\label{demopbw}

To show this result, we describe more precisely $B(\PP)$ and a basis. Then we will use a filtration to study the homology of $E^0 B(\PP)(r)_\lambda $.

\subsection{Explicit description of $B(\PP)$}\label{bardetails}

By definition, $B(F(M))$ is equal to $\bigoplus_{\sigma} \sigma(F(M))$. Explicitly, a generator of $\sigma(F(M))$ corresponds to a tree $\sigma$ labelled with trees labelled by elements of $M$, that is a treewise tensor composed of treewise tensors on $M$. We can represent it by a large tree $\tau$ labelled by elements of $M$ and equipped with a splitting in subtrees $\tau_{comp}$, that we can see as connected components.
The $\tau_{comp}$ are separated by {\it cutting edges} which form a subset $D \subset E'(\tau)$. The union of the internal edges of the subtrees $\tau_{comp}$ form a set $S \subset E'(\tau)$ such that $S \coprod D = E'(\tau)$. We will work with $S$, the set of {\it marking edges}.

The marking edges $S$ determine the decomposition of a treewise tensor $\alpha$ into $\bigotimes \alpha_{comp}$ where $\alpha_{comp}=\alpha_{|\tau_{comp}}$ are the factors in $F(M)$.

So we identify an element of $B(F(M))$ to a pair $(\alpha, S)$, with $\alpha \in \tau(M)$ (\textit{cf}. figure 2).
$$B(F(M)) \cong \bigoplus_{\tau, S} (\tau(M), S).$$

\vspace{0.3cm}

We examine now the differential structure of $B(F(M))$.

The differential $\delta$ is given by
$$\delta(\alpha, S)= \sum_{e \in E'(\tau) - S} \pm (\alpha,S \coprod \{e\})$$
for $\alpha$ a treewise tensor associated to the tree $\tau$.

The operation $(\alpha, S) \mapsto (\alpha, S \coprod e)$ represents a partial composition at the edge $e$ for the element in $B(F(M))$ represented by $(\alpha,S)$.

Notice that the differential changes only the marking and not $\tau(M)$. Hence $\delta(\bigoplus_{S} (\tau(M), S)) \subset \bigoplus_{S} (\tau(M), S)$.

\subsection{Description and basis of $B(\PP)$}
First, let $B_\tau^{B(F(M))}$ be the natural basis of
treewise tensors on the tree $\tau$ labelled with elements of $B^{F(M)}$. Set also
$B^{B(F(M))} = \coprod_{\tau} B_\tau^{B(F(M))}$.

As $\PP = F(M)/(\overline{R})$, the reduced bar construction $B(\PP)$ is a quotient of $B(F(M))$. Two elements $(\alpha,S)$ and $(\alpha',S)$ are identified in  $B(\PP)$ if and only if $S=S'$ and every factor $\alpha_{comp}$ is identified to  $\alpha'_{comp}$ in $\PP$.

We define $B^{B(\PP)}$, a set of elements in $B(F(M))$ representing a basis of $B(\PP)$, starting from the base $B^\PP$ as follows : 
An element $(\beta, S)$ in $ (\tau(M), S)$ is in $B_\tau^{B(\PP)} \subset B_\tau^{B(F(M))}$ if every one of its factors  $\beta_{\tau_{comp}}$ lies in  $B_{\tau_{comp}}^{\PP}$. The element $\beta$ is an element in $B^{F(M)}_\tau$, the basis defined in section \ref{defbasetauM}.

\subsection*{Definition}An edge $e$ is said to be {\it admissible} if the restricted treewise tensor $\alpha_{|\tau_e}$ is in $B^{\PP}$. The set $Adm_{\alpha}$ is the set of the admissible edges of $\alpha$.

\begin{observ}
We have an equivalence
$$(\alpha,S) \in B^{B(\PP)} \Leftrightarrow S \subset Adm_{\alpha}.$$
\end{observ}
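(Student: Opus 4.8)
The plan is to reduce both sides of the claimed equivalence to a single family of conditions indexed by the internal edges of $\tau$, and then to read off the identification. Throughout I fix a monomial $\alpha \in B^{F(M)}_\tau$, which is what makes $(\alpha,S)$ a basis element of $B^{B(F(M))}$ and what makes the notion of admissibility meaningful; both sides of the equivalence presuppose this. Unwinding the definition of $B^{B(\PP)}$, the membership $(\alpha,S) \in B^{B(\PP)}$ means exactly that, for the splitting of $\tau$ into connected components $\tau_{comp}$ determined by the cutting edges $D = E'(\tau)\setminus S$, each factor $\alpha_{|\tau_{comp}}$ lies in $B^\PP_{\tau_{comp}}$.

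First I would apply condition 2 of the PBW-basis definition to each component factor separately: $\alpha_{|\tau_{comp}} \in B^\PP_{\tau_{comp}}$ if and only if $(\alpha_{|\tau_{comp}})_{|(\tau_{comp})_e} \in B^\PP_{(\tau_{comp})_e}$ for every internal edge $e$ of $\tau_{comp}$. Since $S \coprod D = E'(\tau)$ with $D$ the cutting edges separating the components, the internal edges of the various $\tau_{comp}$ are exactly the marking edges $S$. Thus running over all components and over all their internal edges is the same as running over all $e \in S$.

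The hard part---though it is routine---will be checking that the restriction may be computed in a single step, i.e. that $(\alpha_{|\tau_{comp}})_{|(\tau_{comp})_e} = \alpha_{|\tau_e}$ for $e \in S$ lying in $\tau_{comp}$. This holds because the subtree of $\tau_{comp}$ generated by $e$ and the subtree $\tau_e$ of $\tau$ generated by $e$ both consist of the two vertices $s(e), t(e)$ joined by $e$, with the surrounding ingoing and outgoing edges kept, so the restricted tensor depends only on the labels $m_{s(e)}$ and $m_{t(e)}$. Here I would verify that the leaf-labelling convention of section \ref{sousarbre}---label a leaf by the minimum of the leaves linked to it---yields the same answer whether one restricts directly in $\tau$ or first passes through $\tau_{comp}$; this follows from the transitivity of the ``linked to'' relation along monotone paths.

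Putting the steps together, $(\alpha,S) \in B^{B(\PP)}$ becomes equivalent to the condition that $\alpha_{|\tau_e} \in B^\PP_{\tau_e}$ for every $e \in S$. By the definition of an admissible edge this says precisely that every $e \in S$ is admissible, that is $S \subset Adm_{\alpha}$, which is the assertion.
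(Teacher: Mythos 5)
Your proof is correct, and it supplies exactly the argument the paper leaves implicit: the observation is stated without proof, the intended justification being precisely your chain of reductions (unwind the definition of $B^{B(\PP)}$ componentwise, apply condition 2 of the PBW basis to each factor $\alpha_{|\tau_{comp}}$, identify the internal edges of the components with the marking set $S$, and use that restriction to the subtree $\tau_e$ can be done in one step). Your extra care about the leaf-labelling convention in two-step restriction is a legitimate detail the paper glosses over, and it checks out since a minimum of minima over a partition is the overall minimum, so the induced orderings of the entries agree.
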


\subsection{Filtration of $B(\PP)$}

We consider a filtration $\displaystyle{B(\PP) =\bigcup_{\lambda \in I} B(\PP)_\lambda}$ where $I$ is a poset. This poset $I$ is defined by the basis of $F(M)$ and by the partial order specified in section  \ref{defordre}.

In practice, we forget the cutting and we use the partial order of the basis of $F(M)=\bigoplus_{\tau} \tau(M)$. Explicitely, an element $(\alpha,S) \in B(\PP)(r)$ is in  $B(\PP)(r)_\lambda$ if and only if $\alpha \geq \lambda$. Hence 
$$B(\PP)(r) = \bigcup_{\lambda \in I(r)} B(\PP)(r)_\lambda$$
where $I(r)$ is the monomial basis of $F(M)(r)$, a partially ordered set (with the order from section \ref{defordre}). 
\vspace{0.3cm}

Observe that $B(\PP)(r)_\lambda$ is a subcomplex of $B(\PP)(r)$. In fact, the differential $\delta$ corresponds to a partial composition product, modifying the cutting (that we forget in the filtration). Condition 1 of a PBW basis insures that an element is sent on the sum of larger or equal elements.
The differential $d^0$ induced by $\delta$ in the quotient preserves the factor $E^0 B(\PP)(r)_\lambda$, which is generated by the pairs $(\lambda, S)$ which belong to $B^{B(\PP)}$. 

Remark that $d^0_e : (\lambda,S) \mapsto (\lambda,S \coprod \{e\})$, so we can write $d^0= \sum_{e} \pm d^0_e$, taking the sum on the edges $e$ such that $d^0_e(\lambda)$ remains in the basis.

\begin{lemm}An edge $e$ is admissible if and only if $d^0_e (\lambda,S) \neq 0$. 
\end{lemm}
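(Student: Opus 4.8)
The plan is to reduce the statement to the explicit formula for $d^0_e$ given just above, combined with the preceding observation characterising $B^{B(\PP)}$ by admissible edges. By construction, $d^0_e$ acts on a basis element $(\lambda,S)$ by $d^0_e(\lambda,S)=(\lambda,S\coprod\{e\})$ when the right-hand side again belongs to the basis of $E^0 B(\PP)(r)_\lambda$, and $d^0_e(\lambda,S)=0$ otherwise. Thus $d^0_e(\lambda,S)\neq 0$ if and only if $(\lambda,S\coprod\{e\})$ lies in $B^{B(\PP)}$, and the whole problem becomes the combinatorial question of when adjoining $e$ to the marking keeps us in the basis.

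First I would invoke the preceding observation, $(\alpha,S)\in B^{B(\PP)}\Leftrightarrow S\subset Adm_\alpha$. Applying it to $(\lambda,S)$, which is a basis element by hypothesis, gives $S\subset Adm_\lambda$; applying it to $(\lambda,S\coprod\{e\})$ shows that the latter lies in $B^{B(\PP)}$ exactly when $S\coprod\{e\}\subset Adm_\lambda$. Since $S\subset Adm_\lambda$ already, this last inclusion holds if and only if $e\in Adm_\lambda$, that is, if and only if $e$ is admissible. Chaining these equivalences yields $d^0_e(\lambda,S)\neq 0\Leftrightarrow e\text{ admissible}$, which is the assertion of the lemma.

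It remains to justify why $d^0_e$ really vanishes in the non-admissible case, i.e. why the induced differential on $E^0 B(\PP)(r)_\lambda$ is given by the formula above. Adjoining $e$ to $S$ merges the two components of $\tau$ adjacent to $e$ into a single component, whose label in $\PP$ is the treewise tensor $\lambda$ restricted to that merged subtree. By condition 2 of a PBW basis, membership of this restriction in $B^\PP$ is tested edge by edge: its internal edges are those of the two original components (all admissible, since $S\subset Adm_\lambda$) together with $e$ itself, so the merged component lies in $B^\PP$ precisely when $\lambda_{|\tau_e}\in B^\PP$. When $e$ is \emph{not} admissible, condition 1' rewrites $\lambda_{|\tau_e}$ as a combination of strictly larger basis elements, and by the order-compatibility of section \ref{defordre} the whole treewise tensor becomes strictly larger than $\lambda$; such terms are annihilated when passing to the associated graded $E^0 B(\PP)(r)_\lambda$, whence $d^0_e(\lambda,S)=0$.

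The main obstacle is exactly this last paragraph: the passage from the \emph{local} admissibility condition on the single edge $e$ (phrased via the two-vertex subtree $\tau_e$) to the \emph{global} statement that the merged component stays in the basis. The bridge is condition 2, which makes membership in $B^\PP$ detectable one internal edge at a time, so that the only new constraint created by the merge is the one at $e$. Getting this bookkeeping right — and checking that the rewriting in the non-admissible case produces only strictly larger treewise tensors, so that nothing survives in $E^0$ — is the technical heart of the argument; the remaining equivalences are formal once the preceding observation is available.
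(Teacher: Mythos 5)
Your proof is correct and follows essentially the same route as the paper, whose own proof is just two sentences invoking condition 1 for the vanishing on non-admissible edges and condition 2 for the converse. You have simply filled in the details the paper leaves implicit: that condition 2 reduces membership of the merged component in $B^\PP$ to admissibility of the single new edge $e$, and that condition 1' together with the order-compatibility of section \ref{defordre} forces the rewritten terms to be strictly larger than $\lambda$ and hence to die in $E^0 B(\PP)(r)_\lambda$.
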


\begin{proof}
The differential $d^0_e$ transforms a non-marked admissible edge into a marked admissible edge, by condition 1 of a  PBW basis.

Conversely, if $d^0_e (\lambda,S) \neq 0$, then by condition 2 (converse direction), the edge is admissible. 
\end{proof}

\vspace{0.3cm}

\subsection{Homology of $E^0 B(\PP)(r)_\lambda$}\label{prophomo}

The quotient $E^0 B(\PP)(r)_\lambda $ is generated by the pairs $(\lambda, S)$
where $S$ ranges over the subsets of  $Adm_{\lambda}$, the set of admissible edges.
The differential $d^0_e$ sends $(\lambda,S)$ to $\displaystyle{(\lambda,\sum_{e \in Adm_\lambda - S} S \coprod \{e\})}$. 

This combinatorial complex is the dual of the oriented complex $C_ {\ast}(\Delta^{Adm_\lambda})_{+}$ of the simplex $\Delta^{Adm_\lambda}$ augmented over $\KK$, with the augmentation term added in $C_ {\ast}(\Delta^{Adm_\lambda})_{+}$. The inclusion of the summand spanned by $(\lambda, \emptyset)$ in $E^0 B(\PP)(r)_\lambda$ is dual to the augmentation $C_ {\ast}(\Delta^{Adm_\lambda})_{+} \rightarrow \KK$.

If $Adm_{\lambda}=\emptyset$, then the complex is reduced to a unique generator $(\lambda, \emptyset)$. Every component $\tau_{comp}$ is reduced to a vertex (we cut on all edges). This implies that the weight of $\lambda$ is equal to its degree.

If $Adm_{\lambda}$ is not empty, then the homology is zero.

\vspace{0.3cm}

We conclude from these assertions that $H_*E^0 B(\PP)(r)_\lambda =0$ if the weight is different from the degree.

The filtration is compatible with the weight. Hence the associated spectral sequence splits. We obtain $H_*B(\PP)=0$ when the weight is different from the degree. This result achieves the proof of theorem \ref{thpbw}.
\qed

\section{Result on the dual of a Poincar\'e-Birkhoff-Witt operad}

In this section we consider a reduced quadratic operad $\PP=F(M)/(R)$ such that $M$ is a finitely generated $\Sigma_*$-module. Recall that the Koszul construction $K(\PP)$ defined in section \ref{Koszul} is a cooperad, and its linear dual  $K(\PP)^\#$ is an operad.
The goal of this section is to prove the following result:

\begin{theo}
If $\PP$ is a PBW operad, then the dual operad $K(\PP)^\#$ is also a PBW operad.
\end{theo}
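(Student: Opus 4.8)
The plan is to identify the Koszul dual operad explicitly and transport the combinatorics of the basis $B^\PP$ through the duality, reversing the order. Since $\PP$ is PBW, it is Koszul by Theorem \ref{thpbw}, so by standard Koszul duality (see \cite{Benoit, GK}) $K(\PP)^\#$ is the quadratic dual operad $F(M^\#)/(R^\perp)$, where $M^\#$ is the linear dual of $M$ (still finitely generated and reduced, as $M(0)=0$) and $R^\perp \subset F_{(2)}(M^\#)$ is the annihilator of $\overline R$ under the canonical pairing of monomial bases. The essential input is the explicit description of $K(\PP)$ extracted from the proof of Theorem \ref{thpbw}: the analysis of section \ref{prophomo} shows that each local complex $E^0 B(\PP)(r)_\lambda$ is acyclic unless $Adm_\lambda = \emptyset$, in which case it reduces to the single diagonal generator $(\lambda,\emptyset)$. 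Hence $K(\PP)$ admits a $\KK$-basis indexed by the monomials $\lambda \in B^{F(M)}$ with $Adm_\lambda = \emptyset$, that is, those whose restriction $\lambda_{|\tau_e}$ to every internal edge $e$ lies outside $B^\PP$. Dualizing, I obtain a candidate monomial set $B^{K(\PP)^\#} = \{\lambda^\# : Adm_\lambda = \emptyset\} \subset B^{F(M^\#)}$, each $\lambda^\#$ being the treewise tensor on the same tree with vertices labelled by the dual basis of $M$.

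I would first record that this candidate set is automatically a $\KK$-basis of $K(\PP)^\#$, being dual to the basis of $K(\PP)$ above, and that it satisfies condition 2 of a PBW basis by construction: the defining condition $Adm_\lambda=\emptyset$ is precisely the requirement that every one-internal-edge restriction $(\lambda^\#)_{|\tau_e}$ lie in the weight-$2$ candidate set. It then remains to verify condition 1, and by Proposition \ref{condquad} (applicable since $M^\#$ is finitely generated and $K(\PP)^\#$ is quadratic) it suffices to check it on corollas, that is, in weight $2$. Here the whole content is a piece of linear algebra in $F_{(2)}(M^\#)$: for a fixed two-vertex tree $\tau_e$, write the weight-$2$ monomials as a disjoint union $N \sqcup \bar N$ with $N$ the normal monomials of $\PP$ and $\bar N$ the non-normal ones. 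Condition 1 for $\PP$ expresses $\overline R_{\tau_e}$ as the span of the reduction relations $m - \sum_{\gamma > m} c_{m,\gamma}\,\gamma$ with $m \in \bar N$ and $\gamma \in N$; a direct computation of the annihilator shows that $R^\perp_{\tau_e}$ is spanned, one element per $n \in N$, by $n^\# + \sum_{m} c_{m,n}\, m^\#$, the sum running over $m \in \bar N$ with $n > m$. Thus, modulo $R^\perp$, each non-candidate monomial $n^\#$ reduces to a combination of candidate monomials $m^\#$ with $n > m$ in the order of $\PP$.

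The order is what makes condition 1 come out right, and this is the step to treat with care. I would equip $F(M^\#)$ with the reverse of the order used on $F(M)$, transported along the monomial bijection $\mu \leftrightarrow \mu^\#$; the reverse of any order satisfying the compatibility of section \ref{defordre} again satisfies it (the implication is stable under reversing $\le$ on both sides), so this is a legitimate suitable order. Under this reversal $n > m$ in $\PP$ becomes $m^\# > n^\#$, so the reduction $n^\# \equiv -\sum_m c_{m,n}\, m^\#$ expresses a non-candidate monomial as a sum of strictly larger candidate monomials, which is exactly condition 1 (equivalently condition 1'). Proposition \ref{condquad} then propagates condition 1 to all trees, and together with condition 2 and the basis property this shows $B^{K(\PP)^\#}$ is a PBW basis, described explicitly as the duals of the fully non-normal monomials of $\PP$.

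\textbf{Main obstacle.} The crux is the order-reversal together with the leading-term flip: one must confirm that the annihilator $R^\perp_{\tau_e}$ really has the $N$-indexed triangular form above, so that its leading terms are exactly the duals $n^\#$ of the normal monomials of $\PP$, and that reversing the order converts ``reduces to larger'' for $\PP$ into ``reduces to larger'' for $K(\PP)^\#$. A secondary point requiring care is the clean identification of the weight-$2$ relations of $K(\PP)^\#$ with $R^\perp$, namely that the projection $F(M^\#)_{(2)} \fdr K(\PP)^\#_{(2)}$ has kernel $R^\perp$; this follows from the dimension count $\dim R^\perp_{\tau_e} = \# N = \dim K(\PP)^\#_{(2)}(\tau_e)$ together with the triangular generating set, or from the standard identification $K(\PP)^\# = F(M^\#)/(R^\perp)$.
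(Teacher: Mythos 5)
Your proposal follows essentially the same route as the paper: extract the $\KK$-basis of $K(\PP)^\#$ indexed by the monomials with $Adm_\lambda=\emptyset$ from the acyclicity analysis of $E^0 B(\PP)(r)_\lambda$, identify $\overline R^\perp$ by the same triangular linear-algebra computation (one generator $n^\#+\sum_m c_{m,n}m^\#$ per normal monomial), take the opposite order so that the leading-term flip turns condition 1 for $\PP$ into condition 1 for the dual, and propagate from weight $2$ via Proposition \ref{condquad}. The only point you gloss over relative to the paper is the desuspension bookkeeping (the isomorphism $\phi:\Sigma^{-2}F_{(2)}(M^\#)\to F_{(2)}(\Sigma^{-1}M^\#)$ and its signs), which the paper records explicitly but which, as it also notes, does not affect the order argument.
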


We determine a basis of $K(\PP)^\#$, and we prove it defines a PBW basis.

\vspace{0.3cm} 

\subsection{Basis of $K(\PP)^\#$}
First, in order to work with a filtration, we pick a total order which is a refinement of an order satisfying the condition of section \ref{defordre}. The only thing we use is that the subquotient $E^0 B(\PP)(r)_\lambda$ remains unchanged if we replace the partial order by any refinement.

We work here with a fixed number of entries $r$ and a fixed weight $n$. There is a finite number of trees with $n$ vertices and $r$ entries, so there is a finite basis of treewise tensors with $n$ vertices labelled by elements of $M$ and $r$ entries. 
This finite totally ordered set of treewise tensors can be written symbolically $\Lambda_{n,r}=\{ 0 < 1 < \ldots < \lambda < \lambda +1 < \ldots < \mu \}$.

Recall that an element $(\alpha,S) \in B(\PP)(r)$ is in  $B(\PP)(r)_\lambda$ if and only if $\alpha \geq \lambda$. In what follows, we write $F_\lambda = B(\PP)(r)_\lambda$ and $E^0_\lambda=E^0 B_{(n)}(\PP)(r)_\lambda $.

We have a finite filtration of $B_{(n)}(\PP)(r)$ :
$$B_{(n)}(\PP)(r) = F_0 \supseteq F_1 \supseteq \ldots \supseteq F_\lambda \supseteq F_{\lambda+1} \supseteq \ldots \supseteq F_\mu = E^0 _\mu.$$

\begin{sublemm}
For every $\lambda \in \Lambda_{n,r}$, the homology $H_{n-1} F_\lambda$ is $0$.
\end{sublemm}

\begin{proof}
We are using a decreasing induction.

For $\lambda=\mu$, we have $F_\mu = E^0_\mu$. We know that $H_{n-1} E^0 _\mu=0$ as the weight is different from the degree, so $H_{n-1} F_\mu$ is $0$.

Suppose the result true for $\lambda+1$. The long exact sequence in homology induced by $0 \fdr F_{\lambda+1} \fdr  F_{\lambda} \fdr E^0 _\lambda \fdr 0$
 gives 
$$ \ldots \fdr H_{n-1} F_{\lambda+1}  \fdr H_{n-1} F_{\lambda} \fdr H_{n-1} E^0 _\lambda \fdr \ldots .$$

The first term is $0$ by induction, and the third term is $0$ because the weight is different from the degree. So $H_{n-1} F_{\lambda}=0$.
\end{proof}

Another part of the long exact sequence gives the short exact sequence : 
$$ 0 \fdr H_{n} F_{\lambda+1}  \fdr H_{n} F_{\lambda} \fdr H_{n} E^0 _\lambda \fdr 0. $$
The first term $0$ comes from $H_{n+1} E^0_\lambda$ and the second one from $H_{n-1} F_\lambda$.

These short exact sequences can be put together in a diagram, where  vertical arrows are $\coker$'s :
\[\xymatrix{
H_{n} F_{\mu} \ar[r] & \ldots \ar[r] & H_{n} F_{\lambda+1}\ar[d]\ar[r] & H_{n} F_{\lambda}\ar[r]\ar[d]&  \ldots \ar[r] & H_{n} F_{0} .\\
                 &               & H_{n} E^0 _{\lambda+1} & H_{n} E^0 _{\lambda}& & & \\
}\]

Recall that $H_{n} F_\mu=0$ and  $H_{n} F_{0}=H_n (B_{(n)}(\PP)(r))$. We dualize this diagram, using that $H_n(C^\#)= H_n(C)^\#$, and we get the following diagram, where vertical arrows are Ker's :
\[\xymatrix{
0 & \ldots \ar[l] & H_{n} (F_{\lambda+1})^\#\ar[l] & H_{n} (F_{\lambda})^\#\ar[l]&  \ldots \ar[l] &\KK(\PP)^\#_{(n)}(r). \ar[l]\\
                 &               & H_{n} (E^0 _{\lambda+1})^\#\ar[u] & H_{n} (E^0 _{\lambda})^\#\ar[u]& & & \\
}\]

We showed in section \ref{prophomo} that 
$$H_{n} (E^0 _{\lambda}) = \left \{ \begin{array}{ll} \KK, & \textrm{ if } Adm_{\lambda}=\emptyset,  \\ 0, & \textrm{ otherwise.} \end{array} \right. $$

Thus we have $K(\PP)^\#_{(n)}(r) = \bigoplus_\lambda \KK$ where $Adm_{\lambda}=\emptyset$  with $\lambda \in \Lambda_{n,r}$.

Recall $K(\PP)^\#$ is a quotient of $F(\Sigma^{-1}M^\#)$. 
\begin{lemm}
 A basis of the $\KK$-module $K(\PP)^\#$ is represented by $\{ \lambda^\# \in F(\Sigma^{-1}(M^\#)) \ | \ Adm_{\lambda}=\emptyset\}$.
\end{lemm}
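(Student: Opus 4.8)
The plan is to read off the basis directly from the computation of $K(\PP)^\#_{(n)}(r)$ that was just carried out, and then to identify the abstract summands $\bigoplus_\lambda \KK$ with explicit treewise tensors in $F(\Sigma^{-1}M^\#)$. We have already shown that $K(\PP)^\#_{(n)}(r) = \bigoplus_\lambda \KK$, the sum ranging over $\lambda \in \Lambda_{n,r}$ with $Adm_{\lambda}=\emptyset$. So as a $\KK$-module the dimension is exactly the number of such $\lambda$, and it suffices to exhibit a concrete element $\lambda^\#$ of $F(\Sigma^{-1}M^\#)$ for each such $\lambda$ and to check that these elements are linearly independent and project to a basis of the quotient.

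First I would recall precisely how $K(\PP)$ sits inside $B(\PP)$ and how dualization produces elements of the free operad on $\Sigma^{-1}M^\#$. By section \ref{Koszul}, $K(\PP)_{(n)}(r)$ is a submodule of $B_n(\PP)_{(n)}(r)$, namely $\ker(\delta)$ in top degree. When the weight equals the degree $n$, each treewise tensor $(\lambda,S)$ in $B_n(\PP)_{(n)}(r)$ has all its components $\tau_{comp}$ reduced to single vertices (one cuts on every internal edge), so $\lambda$ is a treewise tensor with $n$ vertices each labelled by an element of $M$, i.e. a basis element of $F_{(n)}(M)(r)$. Dualizing the inclusion $K(\PP) \fdr B(\PP)$ and passing through the linear duality $H_n(C^\#)=H_n(C)^\#$ exactly as in the two displayed diagrams, the class corresponding to a given $\lambda$ with $Adm_\lambda = \emptyset$ is detected by the dual functional $\lambda^\#$ on the treewise tensor $\lambda$, which lives naturally in $F(\Sigma^{-1}M^\#)_{(n)}(r)$ since $B_n(\PP)$ is built from the suspension $\Sigma\tilde\PP$ and its top-weight part is a quotient of $F_{(n)}(\Sigma M)$, dualizing to $F_{(n)}(\Sigma^{-1}M^\#)$.

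The main step is then to verify that the collection $\{\lambda^\# \mid Adm_\lambda = \emptyset\}$ is precisely a basis of the quotient $K(\PP)^\#$ and not merely a spanning or merely independent set. For this I would use the filtration diagram already set up: the vertical Ker arrows identify each surviving one-dimensional piece $H_n(E^0_\lambda)^\# = \KK$ (those with $Adm_\lambda=\emptyset$) as a subquotient of $K(\PP)^\#_{(n)}(r)$, and the refinement to a \emph{total} order (chosen at the start of the subsection so that $E^0_\lambda$ is unchanged) guarantees that the associated graded of the filtration splits the module into exactly these one-dimensional pieces. Choosing, for each $\lambda$ with $Adm_\lambda=\emptyset$, the generator $\lambda^\#$ of the corresponding graded piece and lifting it to $K(\PP)^\#$, the resulting family is a basis because a family of lifts of a basis of the associated graded of a finite filtration is automatically a basis of the filtered module.

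The hard part will be bookkeeping the duality carefully enough to be sure the representative really is $\lambda^\#$ in $F(\Sigma^{-1}M^\#)$ with the correct suspension degree, rather than some other lift differing by lower-order (strictly larger, in the filtration) terms. Concretely, the subtlety is that a basis of an associated graded lifts to a basis of the filtered object but the lift is only canonical up to the filtration; I would argue that the choice $\lambda^\#$ is a legitimate lift by observing that $\lambda$ is the minimal treewise tensor appearing in the relevant cycle, so its dual pairs nontrivially with the class in $H_n(F_\lambda)^\#$ and trivially with everything strictly below it in the total order. Once this is established, the identification $K(\PP)^\#_{(n)}(r) = \Span\{\lambda^\# \mid Adm_\lambda=\emptyset\}$ follows, and assembling over all $n$ and $r$ gives the stated basis of the $\KK$-module $K(\PP)^\#$.
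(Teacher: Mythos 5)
Your proposal is correct and follows the same route as the paper: the paper's own proof simply reads the basis off from the computation $K(\PP)^\#_{(n)}(r)=\bigoplus_{\lambda}\KK$ (sum over $\lambda$ with $Adm_\lambda=\emptyset$) obtained from the filtration and the dualized exact sequences in the preceding paragraph. Your additional care about choosing $\lambda^\#$ as a legitimate lift of the associated-graded basis is a reasonable unpacking of what the paper leaves implicit, but it is not a different argument.
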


\begin{proof}
 This result is an obvious consequence of the description of $K(\PP)^\#_{(n)}(r)$ in the previous paragraph.
\end{proof}

These treewise tensors $\lambda$ are determined by the following property : the restricted treewise tensor induced by any edge $e$ is not in $B^\PP$.

\subsection{A PBW basis of $K(\PP)^\#$}

Ginzburg and Kapranov showed in \cite{GK} that $K(\PP)^\#= F(\Sigma^{-1}(M^\#))/(R')$, where $\overline{R'}$ is determined by the exact sequence 
$$ 0 \fdr \overline{R'} \fdr F_{(2)}(\Sigma^{-1}(M^\#)) \fdr K_{(2)}(\PP)^\# \fdr 0.$$

We have to determine $\overline {R'} \subset F_{(2)}(\Sigma^{-1}(M^\#))$ explicitely.

As $\overline R$ is characterized by $0 \fdr \overline R \fdr F_{(2)}(M) \fdr \PP_{(2)} \fdr 0$, we have dually 
$$ 0 \fdr \phi(\Sigma^{-2} \overline R^\bot) \fdr F_{(2)}(\Sigma^{-1}(M^\#)) \fdr K_{(2)}(\PP)^\# \fdr 0,$$
where $\phi$ is the isomorphism between $\Sigma^{-2} F_{(2)}(M^\#)$ and $F_{(2)}(\Sigma^{-1}(M^\#))$. We have to study $\phi(\Sigma^{-2} \overline R^\bot)$.

The main problem will be the suspensions which induce signs. Signs are induced by the classical commutation rule $g \otimes f = (-1)^{|f|.|g|}f \otimes g$. The suspension has degree $+1$.

Recall $F_{(2)}(M)$ is the set of treewise tensors with exactly one internal edge. Its basis $B^{F_{(2)}(M)}$ can be decomposed into $\displaystyle{\{\alpha_i\}_{i \in I} \coprod \{\alpha_j\}_{j \in J}}$ where $\forall i \in I, \alpha_i \notin B^\PP $ and $ \forall j \in J, \alpha_j \in B^\PP$.
The ideal generated by the relations is $\displaystyle{\overline R= \Span \{ \alpha_i-\sum_{ j \in J} c_{ij} \alpha_j ; i \in I \} } \subset F_{(2)}(M)$. A classic result of linear algebra gives $\displaystyle{\overline R^\bot= \Span \lbrace \alpha_j^\# + \sum_{ i \in I} c_{ij} \alpha_i^\# ; j \in J \rbrace} \subset F_{(2)}(M^\#)$.

For $x_1 \in M(n_1)$ and $x_2 \in M(n_2)$, the definition returns us the relation
$$\phi (\Sigma^{-2} w. x_1^\# \circ_i x_ 2^\#)= \epsilon(w) (-1)^{|x_1|} w. \Sigma^{-1} x_1^\# \circ_i \Sigma^{-1} x_2^\#,$$
where $\epsilon(w)$ denotes the signature of the permutation $w$.

As $\Sigma^{-2} \overline R^\bot \subset \Sigma^{-2} F_{(2)}(M^\#)$, we have $\phi(\Sigma^{-2} \overline R^\bot) \subset F_{(2)}(\Sigma^{-1}(M^\#))$. We have $\phi(\Sigma^{-2} \overline R^\bot)= \displaystyle{\Span \lbrace \phi(\Sigma^{-2} \alpha_j^\#) + \sum_{ i \in I} c_{ij} \phi(\Sigma^{-2} \alpha_i^\#) ; j \in J \rbrace }$, where we identify naturally $F_{(2)}(M^\#)$ and $F_{(2)}(M)^\#$.

\begin{theo}
Consider the set $B^\#$ formed by treewise tensors $\beta$ in $F(\Sigma^{-1}(M^\#))$, such that every subtensor generated by an internal edge of $\beta$ is in the set ${\phi(\Sigma^{-2} \alpha_i^\#), i \in I}$.
This set $B^\#$ forms a PBW basis of $K(\PP)^\#$ for the opposite order (denoted $<^\#$).
\end{theo}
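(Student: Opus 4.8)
The plan is to verify directly the three requirements for $B^\#$ to be a PBW basis of $K(\PP)^\#$ with respect to the opposite order $<^\#$: that $B^\#$ represents a $\KK$-basis of $K(\PP)^\#$, and that it satisfies conditions 1 and 2 of the definition of a PBW basis. Condition 2 is immediate, for the defining property of $B^\#$ — that a treewise tensor $\beta$ lies in $B^\#$ precisely when each subtensor generated by an internal edge belongs to $\{\phi(\Sigma^{-2}\alpha_i^\#) : i \in I\}$ — is exactly the local-to-global condition 2, with the quadratic layer $B^\#_{\tau_e}$ being the set of leading duals. Since $M$, hence $\Sigma^{-1}M^\#$, is finitely generated, Proposition \ref{condquad} then reduces condition 1 to condition $1'$ for treewise tensors with a single internal edge, so it suffices to treat the quadratic case once condition 2 and the basis property are in place.

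\textbf{The basis property.} First I would identify $B^\#$ with the module basis already found. The linear lemma of the previous subsection shows that $K(\PP)^\#_{(n)}(r)$ has basis $\{\lambda^\# : Adm_\lambda = \emptyset\}$, that is, those treewise tensors $\lambda$ all of whose edge-restrictions $\lambda_{|\tau_e}$ lie outside $B^\PP$. In the quadratic setting an edge-restriction is an element of $F_{(2)}(M)$, so it is one of the $\alpha_i$ ($i\in I$, $\alpha_i \notin B^\PP$) or one of the $\alpha_j$ ($j \in J$, $\alpha_j \in B^\PP$); thus $Adm_\lambda = \emptyset$ means every edge-restriction is some $\alpha_i$. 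Transporting vertex by vertex through $\phi \circ \Sigma^{-2} \circ (-)^\#$ sends such a $\lambda^\#$ to a treewise tensor $\beta$ in $F(\Sigma^{-1}(M^\#))$ whose every edge-subtensor is a leading dual $\phi(\Sigma^{-2}\alpha_i^\#)$, that is, precisely to an element of $B^\#$. Hence $B^\#$ is in bijection with $\{\lambda^\# : Adm_\lambda = \emptyset\}$ and represents a basis of $K(\PP)^\#$.

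\textbf{The quadratic rewriting.} It remains to check $1'$ for a monomial $\beta \in F_{(2)}(\Sigma^{-1}(M^\#))$ with one internal edge. Such monomials are exactly the $\phi(\Sigma^{-2}\alpha_i^\#)$ ($i \in I$) and $\phi(\Sigma^{-2}\alpha_j^\#)$ ($j \in J$); the former are already in $B^\#$, so I would take $\beta = \phi(\Sigma^{-2}\alpha_j^\#)$. The explicit description $\overline{R'} = \phi(\Sigma^{-2}\overline R^\bot)$ with $\overline R^\bot = \Span\{\alpha_j^\# + \sum_{i\in I} c_{ij}\alpha_i^\# : j\in J\}$ furnishes, modulo the relations of $K(\PP)^\#$, a decomposition $\beta \equiv -\sum_{i\in I} c_{ij}\,\phi(\Sigma^{-2}\alpha_i^\#)$ into leading duals, all of which lie in $B^\#$. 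The point is the direction of the inequality. The coefficients $c_{ij}$ are the same as those of the PBW rewriting $\alpha_i \equiv \sum_{j\in J} c_{ij}\alpha_j$ in $\PP$; condition $1'$ for $\PP$ says that each $\alpha_j$ occurring (i.e. $c_{ij} \neq 0$) satisfies $\alpha_j > \alpha_i$, equivalently $\alpha_i < \alpha_j$. Ordering the dual monomials by transport of the order on $F(M)$ and then reversing, one gets $\phi(\Sigma^{-2}\alpha_i^\#) >^\# \phi(\Sigma^{-2}\alpha_j^\#)$ whenever $\alpha_i < \alpha_j$; hence every basis element appearing in the decomposition of $\beta$ is strictly larger than $\beta$ for $<^\#$. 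This is exactly condition $1'$, and it is here that the choice of the opposite order is forced: the old leading term $\alpha_i$, the smallest in its relation in $\PP$, becomes a subordinate term of the dual relation, so it must be made larger than the new leading term $\alpha_j$, which is precisely what reversing the order achieves.

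\textbf{Main obstacle.} I expect the principal difficulty to be the sign and suspension bookkeeping attached to $\phi \circ \Sigma^{-2}$, governed by the rule $\phi(\Sigma^{-2} w.x_1^\# \circ_i x_2^\#) = \epsilon(w)(-1)^{|x_1|} w.\Sigma^{-1}x_1^\# \circ_i \Sigma^{-1}x_2^\#$. One must confirm that these signs only rescale individual monomials and never alter which monomials occur in a decomposition, so that the order combinatorics — the only data entering the PBW conditions — is left untouched. The second, closely related, delicate point is to pin down once and for all that the order on the dual monomials is the transport of the order on $F(M)$, so that the reversal $<^\#$ flips $\alpha_i < \alpha_j$ into $\phi(\Sigma^{-2}\alpha_i^\#) >^\# \phi(\Sigma^{-2}\alpha_j^\#)$ in exactly the direction condition $1'$ demands.
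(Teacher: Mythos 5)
Your proposal is correct and follows essentially the same route as the paper: the basis property is read off from the description of $K(\PP)^\#_{(n)}(r)$ via the tensors with $Adm_\lambda=\emptyset$, condition 2 holds by definition of $B^\#$, and condition 1 is reduced to the quadratic case by Proposition \ref{condquad} and settled there by observing that condition $1'$ for $\PP$ forces $\alpha_i<\alpha_j$ whenever $c_{ij}\neq 0$, so that the opposite order $<^\#$ makes the dual rewriting $\alpha_j^\#\equiv-\sum_i c_{ij}\alpha_i^\#$ increase. Your remarks on the sign bookkeeping match the paper's brief note that signs and suspensions do not interfere, since they only rescale monomials without changing which ones occur.
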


Note $B^\#$ is uniquely determined by the $\phi(\Sigma^{-2} \alpha_i^\#), i \in I$. 

\begin{proof}
From the descriptions of $\overline R^\bot$ and of the basis of $K(\PP)^\#$, we observe that $B^\#$ is a basis of the module $K(\PP)^\#$. Also, condition 2 to be a PBW basis is true by definition.

Let us show condition 1. Here signs and suspensions do not interfere.
As the $\alpha_j, j \in J$ are the quadratic part of a PBW basis, we have $c_{ij} \neq 0$ for $\alpha_i<\alpha_j$. Hence we have $\alpha_i^\# >^\# \alpha_j^\#$ if $c_{ij} \neq 0$. As a consequence, condition 1 is verified for tensors with only one internal edge ({\it cf.} \ref{condquad}). As $M^\#$ is finitely generated, this implies condition 1 of a PBW basis.
\end{proof}

\subsection{Remark}
When the module $M$ is non-graded, we identify $M$ with a dg-object concentrated in degree $0$.

In the original construction by Ginzburg and Kapranov \cite{GK}, the Koszul dual $\PP^!$ is only defined for quadratic operads generated by binary operations. The original $\PP^!$ is an operadic suspension of $K(\PP)^\#$. The presentation by generators and relations has to be rewritten $\PP^! = F(M^\#)/(R'')$, and thus the operations in the dual $\PP^!$ remain in degree 0 if they were originally in degree 0 in $\PP$. This is not possible when the generators are not binary.

Because of signs, the orthogonal $(R'')$ is here $<\alpha_j^* + \sum_{ i \in I} c_{ij} \alpha_i^* ; j \in J>$, where $\alpha_k^*= \epsilon(w) (-1)^{|x_1|(n_2-1)} (-1)^{(i-1)(n_2-1)} w. x_1^\# \circ_i x_2^\# \in F_{(2)}(M^\#)$ if $\alpha_k = w. x_1 \circ_i x_2$. 
The operad $\PP^!$ has also a PBW basis, whose quadratic part is composed of the treewise tensors $\alpha_i^*, i \in I$.

In the case of operads generated by binary operations, we will work with $\PP^!$ rather than $K(\PP)^\#$, and determine the treewise tensors $\alpha_i^*, i \in I$ and the generating relations $R''$.

\section{Case of non-symmetric operads}
We obtain in a similar way a PBW criterion in the case of the non-symmetric operads.

\subsection{Non-symmetric operads and planar trees}
A non-symmetric operad is defined as an operad, but without the action of symmetric groups. For more details, we refer the reader to \cite{MSS}. We can represent compositions in a non-symmetric operad by planar trees.

The planar structure of a tree is determined by a total order on every set of entries $I_v$ for vertices $v \in V(\tau)$, as in the construction explained in section \ref{sansquotient}.
The planar structure induces a total order on the entries of the tree. When we work with non-symmetric operads, we always consider planar trees with a natural numerotation of the entries, the numeration preserving the order.

The non-symmetric free operad $F_{ns}(M)$ is associated to a non-symmetric
module, a sequence of modules  $M(n), n \in \NN$ without an action of
symmetric groups. We just replace abstract trees by planar trees in this construction.

\subsection{Order on the treewise tensors}
We define an order as in the symmetric case, the only difference being that we
forget pointed shuffles.

Let $M$ be a non-symmetric module, with an ordered basis $B^M$. For every
planar tree $\tau$, we have a natural {\it monomial basis}  $B^{F(M)}_\tau$ of
$\tau(M)$  : an element of this basis is the tree $\tau$ labelled with
elements of $B^M$.

We choose an order on the monomial basis of  $F_{ns}(M)(r)$ for every $r$ in
$\NN$, verifying the following condition :

For $\alpha, \alpha'$ with $m$ entries and $\beta,\beta'$ with $n$ entries,
we have
  \begin{equation} 
\left \{ \begin{array}{l} \alpha \leq \alpha' \\ \beta \leq \beta' \end{array} \right. \Rightarrow \forall i, \alpha \circ_i \beta \leq \alpha' \circ_i \beta'. 
\nonumber \end{equation}

\subsection{Non-symmetric PBW basis}

We define this notion as in the symmetric case, but without pointed shuffles.

Let $\PP$ be a non-symmetric operad, defined by  $F_{ns}(M)/(R)$. 

A {\it PBW basis} of $\PP$ is a set $B^\PP \subset
F_{ns}(M)$ of representatives of a base of the module $\PP$, containing $1$,
$B^M$ and for all  $\tau$ a subset $B_\tau^\PP$ of $B_\tau^{F(M)}$, verifying
the following properties :
\begin{enumerate}
\item For $\alpha \in B_\sigma^\PP$, $\beta \in B_\tau^\PP$, either $\alpha \circ_i \beta$ is in $B_{\sigma \circ_i \tau}^\PP$, or the elements of the basis $\gamma \in B^\PP$ which appear in the unique decomposition  $\alpha \circ_i \beta \equiv \Sigma_\gamma c_\gamma \gamma$,  verify $\gamma > \alpha \circ_i \beta$ in $F(M)$.
	\item A treewise tensor $\alpha$ is in $B_\tau^\PP$ if and only if for
      every internal edge $e$ of $\tau$, the restricted treewise tensor
      $\alpha_{|\tau_e}$ lies in  $B^\PP_{\tau_e}$.	
\end{enumerate}

\subsection{Symmetrization}
The forgetful functor from $\Sigma_*$-modules to sequences of (non-symmetric) modules
has a left adjoint  $\_ \t \Sigma_*$. If $\PP$ is a
non-symmetric operad, then the associated $\Sigma_*$-module  $\PP \t \Sigma_*$
has a natural operad structure.
For a free operad, we obtain $F_{ns}(M_{ns}) \t \Sigma_* = F(M_{ns} \t \Sigma_*)$.

We extend the order relation from $F_{ns}(M_{ns})$ to $F_{ns}(M_{ns}) \t
\Sigma_*$, setting : 
\begin{equation} 
\alpha \t \sigma \leq \alpha' \t \sigma' \text{ if } \left \{ \begin{array}{l} \sigma = \sigma' \\ \alpha \leq \alpha' \end{array} \right. . 
\nonumber \end{equation} 

We do not compare the elements if $\sigma \neq \sigma'$.

\begin{lemm}
A symmetric PBW basis  of $\PP$ is given by the orbits of a
non-symmetric PBW basis.
\end{lemm}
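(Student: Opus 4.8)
The plan is to establish a bijective correspondence between the symmetric and non-symmetric settings via the symmetrization functor $\_ \t \Sigma_*$, and then verify that this correspondence transports the defining properties of a PBW basis. First I would use the identification $F_{ns}(M_{ns}) \t \Sigma_* = F(M_{ns} \t \Sigma_*)$ recalled just above: a treewise tensor in the symmetric free operad $F(M_{ns} \t \Sigma_*)$ is exactly a pair $(\alpha, \sigma)$ where $\alpha$ is a planar (non-symmetric) treewise tensor and $\sigma \in \Sigma_*$ records the relabelling of the entries. The orbit of a non-symmetric basis element $\alpha \in B^\PP$ under the symmetric group is then the set $\{\alpha \t \sigma\}_\sigma$, and I would \emph{define} the candidate symmetric basis $\widetilde{B}^\PP$ to be the union of these orbits, showing it represents a basis of the symmetrized operad $\PP \t \Sigma_*$ by the defining adjunction (each $\Sigma_*$-orbit contributes one free $\KK[\Sigma_*]$-summand).

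Next I would check condition 2 of a symmetric PBW basis for $\widetilde{B}^\PP$. This is essentially formal: restriction to a subtree $\tau_e$ commutes with the symmetrization $\_ \t \sigma$, since applying a permutation of the global entries does not change which factors $m_v$ sit at the vertices of the restricted subtree $\tau_e$ (it only relabels leaves). Hence $(\alpha \t \sigma)_{|\tau_e}$ lies in the symmetric basis if and only if $\alpha_{|\tau_e}$ lies in the non-symmetric basis $B^\PP_{\tau_e}$, so condition 2 transfers directly from the non-symmetric condition 2.

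The main work, and the step I expect to be the principal obstacle, is condition 1, because here the notion of a \emph{pointed shuffle} enters. In the symmetric setting one must control $w. \alpha \circ_i \beta$ for every pointed shuffle $w$, whereas the non-symmetric condition only controls the straight composition $\alpha \circ_i \beta$. The key observation is that, under the identification $F(M_{ns}\t\Sigma_*)$, acting by a pointed shuffle $w$ on a composite corresponds precisely to choosing a representative $\alpha \t \sigma$, $\beta \t \tau$ of the orbits and recombining their permutation labels; that is, the pointed shuffles of a composition are exactly parametrized by the symmetric-group data attached to the two factors. I would make this precise by noting that observation~\ref{baseuniq} already characterizes $B^{F(M)}$ via pointed shuffles, so a symmetric basis is forced to be closed under them, and that the order on $F_{ns}(M_{ns})\t\Sigma_*$ was defined to compare $\alpha \t \sigma$ and $\alpha' \t \sigma'$ only when $\sigma = \sigma'$ (reducing to the non-symmetric comparison $\alpha \leq \alpha'$).

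Consequently, once a pointed shuffle $w$ and the factor labels are fixed, the symmetric composite $w. \alpha \circ_i \beta$ sits in a single $\sigma$-stratum, and its straightening $\equiv \Sigma_\gamma c_\gamma \gamma$ reduces to the non-symmetric straightening of the underlying planar composite, tensored with the fixed permutation. Since the order does not mix distinct $\sigma$-strata, the inequality $\gamma > w.\alpha\circ_i\beta$ follows from the non-symmetric condition 1 inequality $\gamma_0 > \alpha \circ_i \beta$ for the planar parts. I would conclude that $\widetilde{B}^\PP$ satisfies condition 1, and therefore the orbits of a non-symmetric PBW basis form a symmetric PBW basis, as claimed.
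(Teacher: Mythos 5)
Your argument is correct and is exactly the argument the paper has in mind: the paper's own proof of this lemma consists of the single word ``Easy,'' so your write-up simply supplies the details the author omits. The one step worth stating carefully is the composition formula in the symmetrization, $(\alpha\t\sigma)\circ_i(\beta\t\tau)=(\alpha\circ_{j}\beta)\t\rho$ with $j$ and $\rho$ determined by $\sigma,\tau,i$, which shows that a pointed shuffle only alters the permutation label and never the planar part, so that rewriting and the order comparison both take place inside a single $\sigma$-stratum, exactly as you claim.
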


\begin{proof}
 Easy.
\end{proof}

As a corollary, we have :

\begin{theo}
A non-symmetric operad which has a non-symmetric PBW
basis is Koszul, and the non-symmetric dual operad has a non-symmetric PBW
basis, which can be explicitely determined from the other basis.
\qed
\end{theo}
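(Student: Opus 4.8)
The plan is to deduce this corollary from the symmetric results by transporting everything through the symmetrization functor $\_ \otimes \Sigma_*$ and the lemma just proved. Let $\PP = F_{ns}(M)/(R)$ be a non-symmetric operad equipped with a non-symmetric PBW basis $B^\PP$. First I would form the symmetrization $\PP \otimes \Sigma_*$, which by the identity $F_{ns}(M) \otimes \Sigma_* = F(M \otimes \Sigma_*)$ is the symmetric operad $F(M \otimes \Sigma_*)/(R \otimes \Sigma_*)$. By the preceding lemma, the orbits of $B^\PP$ form a symmetric PBW basis of $\PP \otimes \Sigma_*$; since $M(0)=0$ this operad is reduced, so Theorem \ref{thpbw} applies and $\PP \otimes \Sigma_*$ is Koszul.

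The next step is to transfer Koszulness back to $\PP$ as a non-symmetric operad. The key observation is that $\_ \otimes \Sigma_*$ commutes with the reduced bar construction: at the level of treewise tensors there is a natural isomorphism $B(\PP \otimes \Sigma_*) \cong B_{ns}(\PP) \otimes \Sigma_*$ compatible with the differential, the weight grading and the homological degree, because the differential is assembled from partial composition products that are unchanged by adjoining the regular representation at each arity. Since $(N \otimes \Sigma_*)(n) = N(n) \otimes \KK[\Sigma_n]$ is an exact and faithful functor of $N$ (tensoring over the field $\KK$ with a nonzero free module), homology commutes with $\_ \otimes \Sigma_*$, giving $H_*(B(\PP \otimes \Sigma_*))_{(s)} \cong H_*(B_{ns}(\PP))_{(s)} \otimes \Sigma_*$ in each weight $(s)$. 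The symmetric homology is concentrated on the diagonal $*=s$ if and only if the non-symmetric homology is; hence $\PP$ is Koszul in the non-symmetric sense.

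For the statement on the dual, I would apply the theorem of Section 5 to the symmetric PBW operad $\PP \otimes \Sigma_*$, obtaining that $K(\PP \otimes \Sigma_*)^\#$ is a symmetric PBW operad with the explicit basis described there (treewise tensors on $\Sigma^{-1}((M \otimes \Sigma_*)^\#)$ whose every internal-edge subtensor lies among the $\phi(\Sigma^{-2}\alpha_i^\#)$). Because $\_ \otimes \Sigma_*$ commutes with the bar construction and with homology, it also commutes with the Koszul construction, so $K(\PP \otimes \Sigma_*) \cong K_{ns}(\PP) \otimes \Sigma_*$ and dually $K(\PP \otimes \Sigma_*)^\# \cong K_{ns}(\PP)^\# \otimes \Sigma_*$. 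This explicit basis is $\Sigma_*$-stable, being defined by a permutation-equivariant condition on internal-edge subtensors, so it is a union of orbits; invoking the lemma in the reverse direction and choosing one representative per orbit yields a non-symmetric PBW basis of $K_{ns}(\PP)^\#$, namely the planar treewise tensors whose internal-edge subtensors are the non-symmetric analogues of the $\phi(\Sigma^{-2}\alpha_i^\#)$, with signs and suspensions governed exactly as in Section 5.

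The main obstacle I expect is the verification that symmetrization genuinely commutes with the bar and Koszul constructions in a grading-preserving way, and in particular that the non-symmetric dual $K_{ns}(\PP)^\#$ is recovered as the desymmetrization of $K(\PP \otimes \Sigma_*)^\#$ rather than merely agreeing with it arity by arity in dimension. Making this canonical requires checking that the operad and cooperad structure maps, the differential, and the sign and suspension conventions of Section 5 all descend through the adjunction $\_ \otimes \Sigma_*$; once that naturality is established, the transfer of both the Koszul property and the explicit PBW basis is formal.
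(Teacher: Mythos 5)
Your proposal is correct and follows essentially the same route as the paper, which derives this theorem as an immediate corollary of the symmetrization lemma (orbits of a non-symmetric PBW basis form a symmetric PBW basis) combined with the symmetric theorems of sections 4 and 5. The paper leaves the transfer of Koszulness and of the dual basis back through $\_ \otimes \Sigma_*$ entirely implicit, so your verification that symmetrization commutes with the bar and Koszul constructions and that homology commutes with the exact functor $\_ \otimes \KK[\Sigma_n]$ simply supplies the details the paper omits.
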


\section{Examples}
We know that the following operads are Koszul : commutative $\mathcal C$, associative $\mathcal A$ and Lie $\mathcal Lie$ (\textit{cf}. Ginzburg and Kapranov \cite {GK}).
We  use our PBW criterion on these examples and on
some other operads. To simplify notations, we write sometimes relations with treewise
tensors in the operad, and sometimes in line in the associated algebra. We do not draw the root of the trees. In the examples with operads generated by binary operations, we will work with $\PP^!$, and determine the treewise tensors $\alpha_i^*, i \in I$ and the generating relations $R''$. Else we consider the dual $K(\PP)^\#$.

Recall that by condition 2, a treewise tensor is in the basis if and only if every subtensor generated by an edge is in the basis. As a consequence, we specify only the quadratic part of the basis to determine the basis completely. 

Verifications are omitted.

There are two main methods to find PBW bases :
\begin{itemize}
 \item We can start from a basis, and we need to find the an order on $M$ so it is a PBW basis (we have to check it verifies conditions 1 and 2).
 \item We can start from an ordered basis of $M$, which forces us the choice of the quadratic part (because of the relations). We then construct the set generated by this quadratic part (we are assured it verifies conditions 1 and 2) and we need to check if it is a basis (as a $\KK$-module).
\end{itemize}

\subsection{The associative operad}

In the binary case, the associative operad is generated by a single binary operation $\mu$, which
verifies the associativity relation $\mu(a,\mu(b,c))= \mu (\mu (a,b), c)$.

For the lexicographical order, the quadratic part of a non-symmetric PBW basis is given by $\begin{array}{c}
 \xymatrix@M=1pt@C=6pt@R=6pt{
1 \ar@{-}[dr] & & 2\ar@{-}[dl] & & 3 \ar@{-}[ddll] \\
 & *{\mu}\ar@{-}[dr]  & &  & \\
 & & *{\mu} & & 
}\end{array}$. The dual operad also has a non-symmetric PBW basis, whose quadratic part is given by $\begin{array}{c}
\xymatrix@M=1pt@C=6pt@R=6pt{
1 \ar@{-}[ddrr] & & 2\ar@{-}[dr] & & 3 \ar@{-}[dl] \\
 & & & *{\mu}\ar@{-}[dl]  & \\
 & & *{\mu} & & 
}\end{array}$. The relation is still the associativity relation, so the associative operad is self-dual.

\subsection{Generalization for higher associative operads}\label{pasbinaire}
It is possible to generalize the notion of associativity for operations of arity larger than $2$. For more details, we refer the reader to \cite{Gned}. The operads here are not generated by binary operations.

In the ternary case, one can define two types of associative operad. The totally associative operad satisfies 
$\mu(a, b, \mu(c,d,e))= \mu (a, \mu (b,c,d), e) = \mu(a,b,\mu(c,d,e))$, while the partially associative operad satisfies 
$\mu(a, b, \mu(c,d,e)) + \mu (a, \mu (b,c,d), e) + \mu(a,b,\mu(c,d,e))=0$. 

For the lexicographical order, the quadratic part of a non-symmetric PBW basis of the totally associative operad is 
$\begin{array}{c}
\xymatrix@M=1pt@C=6pt@R=6pt{
 1\ar@{-}[dr] & 2\ar@{-}[d]& 3\ar@{-}[dl] & 4\ar@{-}[ddl] &  5\ar@{-}[ddll] \\
 & *{\mu}\ar@{-}[dr] & &  & \\
 & & *{\mu} & & 
}\end{array}$. As a consequence, this operad is Koszul, and its dual $K(\PP)^\#$ is the partially associative operad where operations are in degree $1$, with the quadratic part of a PBW basis composed of 
$\begin{array}{c}
\xymatrix@M=1pt@C=6pt@R=6pt{
 1\ar@{-}[ddrr] & 2\ar@{-}[dr]& 3\ar@{-}[d] & 4\ar@{-}[dl] &  5\ar@{-}[ddll] \\
 &  &*{\mu}\ar@{-}[d] &  & \\
 & & *{\mu} & & 
}\end{array}$ and $\begin{array}{c}
\xymatrix@M=1pt@C=6pt@R=6pt{
 1\ar@{-}[ddrr] & 2\ar@{-}[ddr]& 3\ar@{-}[dr] & 4\ar@{-}[d] &  5\ar@{-}[dl] \\
 & & &*{\mu}\ar@{-}[dl]   & \\
 & & *{\mu} & & 
}\end{array}$ for the reverse-length lexicographical order. 

The same result can be shown for larger arities (with signs depending on the parity), {\it cf}. \cite{Gned}.

\subsection{The commutative and Lie operads}

The commutative operad is generated by a single binary operation $\mu$, which
verifies commutativity and associativity.
$$\mu(a,b)=\mu(b,a) \ \text{and} \ \mu(a,\mu(b,c))= \mu (\mu (a,b), c)$$

The relations in degree $2$ are
\begin{equation*}\begin{array}{c}
 $\xymatrix@M=1pt@C=6pt@R=6pt{
1 \ar@{-}[dr] & & 2\ar@{-}[dl] & & 3 \ar@{-}[ddll] \\
 & *{\mu}\ar@{-}[dr]  & &  & \\
 & & *{\mu} & & 
}$\end{array} = \begin{array}{c}
 $\xymatrix@M=1pt@C=6pt@R=6pt{
1 \ar@{-}[dr] & & 3\ar@{-}[dl] & & 2 \ar@{-}[ddll] \\
 & *{\mu}\ar@{-}[dr]  & &  & \\
 & & *{\mu} & & 
}$\end{array} = \begin{array}{c}
$\xymatrix@M=1pt@C=6pt@R=6pt{
1 \ar@{-}[ddrr] & & 2\ar@{-}[dr] & & 3 \ar@{-}[dl] \\
 & & & *{\mu}\ar@{-}[dl]  & \\
 & & *{\mu} & & 
}$\end{array}\end{equation*}

For the reverse-length lexicographical order, we get
\begin{equation*}\begin{array}{c}
 $\xymatrix@M=1pt@C=6pt@R=6pt{
1 \ar@{-}[dr] & & 2\ar@{-}[dl] & & 3 \ar@{-}[ddll] \\
 & *{\mu}\ar@{-}[dr]  & &  & \\
 & & *{\mu} & & 
}$\end{array} < \begin{array}{c}
 $\xymatrix@M=1pt@C=6pt@R=6pt{
1 \ar@{-}[dr] & & 3\ar@{-}[dl] & & 2 \ar@{-}[ddll] \\
 & *{\mu}\ar@{-}[dr]  & &  & \\
 & & *{\mu} & & 
}$\end{array} < \begin{array}{c}
$\xymatrix@M=1pt@C=6pt@R=6pt{
1 \ar@{-}[ddrr] & & 2\ar@{-}[dr] & & 3 \ar@{-}[dl] \\
 & & & *{\mu}\ar@{-}[dl]  & \\
 & & *{\mu} & & 
}$\end{array}, 
\end{equation*}

We check easily that the maximal element $\begin{array}{c}
$\xymatrix@M=1pt@C=6pt@R=6pt{
1 \ar@{-}[ddrr] & & 2\ar@{-}[dr] & & 3 \ar@{-}[dl] \\
 & & & *{\mu}\ar@{-}[dl]  & \\
 & & *{\mu} & & 
}$\end{array}$ in the quadratic relations is the quadratic part of
a PBW basis of the commutative operad (and as a consequence, this operad is
Koszul).

The dual operad is also Koszul, and the quadratic part of a PBW basis consists of two treewise tensors $\begin{array}{c}
 $\xymatrix@M=1pt@C=6pt@R=6pt{
1 \ar@{-}[dr] & & 2\ar@{-}[dl] & & 3 \ar@{-}[ddll] \\
 & *{[,]}\ar@{-}[dr]  & &  & \\
 & & *{[,]} & & 
 }$\end{array} \text{ and } \begin{array}{c}
 $\xymatrix@M=1pt@C=6pt@R=6pt{
1 \ar@{-}[dr] & & 3\ar@{-}[dl] & & 2 \ar@{-}[ddll] \\
 & *{[,]}\ar@{-}[dr]  & &  & \\
 & & *{[,]} & & 
 }$\end{array}$, where $[,]$ is the dual of $\mu$ and is anticommutative.

The relations in the dual operad is 
\begin{equation*}\begin{array}{c}
 $\xymatrix@M=1pt@C=6pt@R=6pt{
1 \ar@{-}[dr] & & 2\ar@{-}[dl] & & 3 \ar@{-}[ddll] \\
 & *{[,]}\ar@{-}[dr]  & &  & \\
 & & *{[,]} & & 
 }$\end{array} - \begin{array}{c}
 $\xymatrix@M=1pt@C=6pt@R=6pt{
1 \ar@{-}[dr] & & 3\ar@{-}[dl] & & 2 \ar@{-}[ddll] \\
 & *{[,]}\ar@{-}[dr]  & &  & \\
 & & *{[,]} & & 
 }$\end{array} = \begin{array}{c}
$\xymatrix@M=1pt@C=6pt@R=6pt{
1 \ar@{-}[ddrr] & & 2\ar@{-}[dr] & & 3 \ar@{-}[dl] \\
 & & & *{[,]}\ar@{-}[dl]  & \\
 & & *{[,]} & & 
}$\end{array}\end{equation*}

We recognize the Jacobi relation. So the operad $\mathcal Lie$ is the dual operad of $\mathcal C$, and as a consequence is Koszul.
Note that we retrieve the basis of Reutenauer in \cite[Section 5.6.2]{Reutenauer}.

\subsection{The Poisson operad}

The $\mathcal Poisson$ operad can be defined as $\mathcal C  \circ \mathcal
Lie$.

Explicitely, it is generated by $M= \KK . \bullet \oplus \KK [sgn] [,]$, with
the relations 
$$a \bullet (b \bullet c) = (a \bullet b) \bullet c \text { (Associativity)}$$
$$[[a,b],c]+[[b,c],a]+[[c,a],b]=0 \text{ (Jacobi)}$$
$$[a \bullet b,c]= a \bullet [b,c]+ b \bullet [a,c] \text{ (Poisson)}$$

We use the lexicographical order and we set $\bullet > [,]$.

We already know the quadratic part of a PBW basis for $\mathcal Lie$ and $\mathcal
Com$. After some calculations for the action of $\Sigma_3$ on the Poisson relation, 
we can determine the quadratic part of a PBW basis :
\begin{equation*}\begin{array}{c}
 $\xymatrix@M=1pt@C=6pt@R=6pt{
1 \ar@{-}[dr] & & 2\ar@{-}[dl] & & 3 \ar@{-}[ddll] \\
 & *{\bullet}\ar@{-}[dr]  & &  & \\
 & & *{\bullet} & & 
}$\end{array}, \begin{array}{c}
$\xymatrix@M=1pt@C=6pt@R=6pt{
1 \ar@{-}[dr] & & 2\ar@{-}[dl] & & 3 \ar@{-}[ddll] \\
 & *{[,]}\ar@{-}[dr]  & &  & \\
 & & *{[,]} & & 
 }$\end{array},  \begin{array}{c}
 $\xymatrix@M=1pt@C=6pt@R=6pt{
1 \ar@{-}[dr] & & 3\ar@{-}[dl] & & 2 \ar@{-}[ddll] \\
 & *{[,]}\ar@{-}[dr]  & &  & \\
 & & *{[,]} & & 
 }$\end{array}, 
 \end{equation*}
\begin{equation*}\begin{array}{c}
$\xymatrix@M=1pt@C=6pt@R=6pt{
1 \ar@{-}[dr] & & 2\ar@{-}[dl] & & 3 \ar@{-}[ddll] \\
 & *{\bullet}\ar@{-}[dr]  & &  & \\
 & & *{[,]} & & 
 }$\end{array},  \begin{array}{c}
 $\xymatrix@M=1pt@C=6pt@R=6pt{
1 \ar@{-}[dr] & & 3\ar@{-}[dl] & & 2 \ar@{-}[ddll] \\
 & *{[,]}\ar@{-}[dr]  & &  & \\
 & & *{\bullet} & & 
 }$\end{array}, \begin{array}{c}
$\xymatrix@M=1pt@C=6pt@R=6pt{
1 \ar@{-}[dr] & & 2\ar@{-}[dl] & & 3 \ar@{-}[ddll] \\
 & *{[,]}\ar@{-}[dr]  & &  & \\
 & & *{\bullet} & & 
}$\end{array}. \end{equation*}

So the Poisson operad is Koszul and the quadratic part of a PBW basis of its dual is :
\begin{equation*}\begin{array}{c}
$\xymatrix@M=1pt@C=6pt@R=6pt{
1 \ar@{-}[ddrr] & & 2\ar@{-}[dr] & & 3 \ar@{-}[dl] \\
 & & & *{[,]^\#}\ar@{-}[dl]  & \\
 & & *{[,]^\#} & & 
}$\end{array}, \begin{array}{c}
$\xymatrix@M=1pt@C=6pt@R=6pt{
1 \ar@{-}[ddrr] & & 2\ar@{-}[dr] & & 3 \ar@{-}[dl] \\
 &   & & *{\bullet^\#}\ar@{-}[dl] & \\
 & & *{\bullet^\#} & & 
 }$\end{array},  \begin{array}{c}
 $\xymatrix@M=1pt@C=6pt@R=6pt{
1 \ar@{-}[dr] & & 3\ar@{-}[dl] & & 2 \ar@{-}[ddll] \\
 & *{\bullet^\#}\ar@{-}[dr]  & &  & \\
 & & *{\bullet^\#} & & 
 }$\end{array}, 
 \end{equation*}
 
 \begin{equation*}\begin{array}{c}
$\xymatrix@M=1pt@C=6pt@R=6pt{
1 \ar@{-}[dr] & & 3\ar@{-}[dl] & & 2 \ar@{-}[ddll] \\
 & *{\bullet^\#}\ar@{-}[dr] & &   & \\
 & & *{[,]^\#} & & 
}$\end{array},  \begin{array}{c}
$\xymatrix@M=1pt@C=6pt@R=6pt{
1 \ar@{-}[ddrr] & & 2\ar@{-}[dr] & & 3 \ar@{-}[dl] \\
 & & & *{[,]^\#}\ar@{-}[dl]  & \\
 & & *{\bullet^\#} & & 
}$\end{array}, \begin{array}{c}
$\xymatrix@M=1pt@C=6pt@R=6pt{
1 \ar@{-}[ddrr] & & 2\ar@{-}[dr] & & 3 \ar@{-}[dl] \\
 & & & *{\bullet^\#}\ar@{-}[dl]  & \\
 & & *{[,]^\#} & & 
}$\end{array}. \end{equation*}

The operation $\bullet^\#$ is anticommutative and satisfies the Jacobi relation, while the operation $[,]^\#$
is commutative and associative. The two operations together satisfy a Poisson relation. So we have retrieved that 
$\mathcal Poisson$ is self-dual, which was already proved by Markl using distributive laws in \cite{Markl}.

\subsection{The Perm and Prelie operads}

The $\mathcal Perm$ operad is defined by a single operation $\bullet$ satisfying :
$(x\bullet y) \bullet z = x \bullet (y \bullet z)= x \bullet (z \bullet y)$.

Let $\tau$ be the transposite $(12) \in \Sigma_2$.

For the lexicographical order and $\bullet > \tau \bullet$, a PBW basis is given by 
 \begin{equation*}\begin{array}{c}
 $\xymatrix@M=1pt@C=6pt@R=6pt{
1 \ar@{-}[dr] & & 2\ar@{-}[dl] & & 3 \ar@{-}[ddll] \\
 & *{\bullet}\ar@{-}[dr]  & &  & \\
 & & *{\bullet} & & 
}$\end{array},  \begin{array}{c}
 $\xymatrix@M=1pt@C=6pt@R=6pt{
1 \ar@{-}[dr] & & 3\ar@{-}[dl] & & 2 \ar@{-}[ddll] \\
 & *{\tau \bullet}\ar@{-}[dr]  & &  & \\
 & & *{ \bullet} & & 
}$\end{array} \text{and} \begin{array}{c}
 $\xymatrix@M=1pt@C=6pt@R=6pt{
1 \ar@{-}[dr] & & 2\ar@{-}[dl] & & 3 \ar@{-}[ddll] \\
 & *{\tau \bullet}\ar@{-}[dr]  & &  & \\
 & & *{\bullet} & & 
}$\end{array}. \end{equation*}

The duals of the nine quadratic treewise tensors in the complementary are a PBW basis of the dual operad, 
and the relation ideal is generated by 
 \begin{equation*}\begin{array}{c}
 $\xymatrix@M=1pt@C=6pt@R=6pt{
1 \ar@{-}[dr] & & 2\ar@{-}[dl] & & 3 \ar@{-}[ddll] \\
 & *{\bullet^\#}\ar@{-}[dr]  & &  & \\
 & & *{\bullet^\#} & & 
}$\end{array}-  \begin{array}{c}
 $\xymatrix@M=1pt@C=6pt@R=6pt{
1 \ar@{-}[dr] & & 3\ar@{-}[dl] & & 2 \ar@{-}[ddll] \\
 & *{\bullet^\#}\ar@{-}[dr]  & &  & \\
 & & *{\bullet^\#} & & 
}$\end{array} - \begin{array}{c}
$\xymatrix@M=1pt@C=6pt@R=6pt{
1 \ar@{-}[ddrr] & & 2\ar@{-}[dr] & & 3 \ar@{-}[dl] \\
 & & & *{\bullet^\#}\ar@{-}[dl]  & \\
 & & *{\bullet^\#} & & 
}$\end{array} + \begin{array}{c}
$\xymatrix@M=1pt@C=6pt@R=6pt{
1 \ar@{-}[ddrr] & & 2\ar@{-}[dr] & & 3 \ar@{-}[dl] \\
 & & & *{\tau \bullet^\#}\ar@{-}[dl]  & \\
 & & *{\bullet^\#} & & 
}$\end{array}=0. \end{equation*}

This relation is known to define the $\mathcal Prelie$ operad. 
So we have shown that $\mathcal Prelie$ and $\mathcal Perm$ are Koszul and dual to each other.
This was already proved by Chapoton and Livernet in \cite{CL}.

\subsection{The  $m-Dend$ operad}
A $\KK$-vector space $V$ is an $m$-dendriform algebra if it is equipped with $m$ binary operations
$\bullet_1, \ldots, \bullet_m: V^{\otimes 2} \longrightarrow V$ verifying for all $x,y,z \in V,$ and for all $2 \leq i \leq m-1$,
the axioms
$$ (x\prec y)\prec z = x \prec (y \star z), \ \  \  \ (x \prec y)\bullet_i z =x\bullet_i (y \succ z) \ \forall \, 2 \leq i \leq m-1, $$ 
$$ (x \succ y)\prec z = x \succ (y \prec z), \ \  \  \ (x \succ y)\bullet_i z =x\succ (y \bullet_i z) \ \forall \, 2 \leq i \leq m-1, $$
$$(x \star y)\succ z =x\succ (y \succ z) , \ \  \  \ (x\bullet_i  y)\prec z = x \bullet_i (y \prec z) \ \forall \, 2 \leq i \leq m-1, $$
$$(x\bullet_i  y)\bullet_j z = x \bullet_i (y \bullet_j z) \ \ \forall \, 2 \leq i<j \leq m-1, $$
where $\bullet_1 := \succ$, $\bullet_m := \prec$ and $x \star y := x\prec y + x\succ y$.

We work with the associated operad, which was introduced by Leroux in \cite{Leroux}. He conjectured it was Koszul for $m>2$.
For $m=2$, the operad is the classical dendriform operad, introduced by Loday, and is Koszul \cite{Loday2}.

For the lexicographical order and $\bullet_i < \bullet_j$ if $i<j$, the quadratic part of a non-symmetric PBW basis is defined by all treewise tensors on 
$\begin{array}{c}
\xymatrix@M=1pt@C=6pt@R=6pt{
1 \ar@{-}[dr] & & 2\ar@{-}[dl] & & 3 \ar@{-}[ddll] \\
 & \ar@{-}[dr]  & &  & \\
 & &  & & 
} \end{array}$ and the following tensors : 
 \begin{equation*}\begin{array}{c}
$\xymatrix@M=1pt@C=6pt@R=6pt{
1 \ar@{-}[ddrr] & & 2\ar@{-}[dr] & & 3 \ar@{-}[dl] \\
 & & & *{\prec}\ar@{-}[dl]  & \\
 & & *{\prec} & & 
}$\end{array},  \begin{array}{c}
$\xymatrix@M=1pt@C=6pt@R=6pt{
1 \ar@{-}[ddrr] & & 2\ar@{-}[dr] & & 3 \ar@{-}[dl] \\
 & & & *{\bullet_i}\ar@{-}[dl]  & \\
 & & *{\prec} & & 
}$\end{array} \forall \, 2 \leq i \leq m-1 , \end{equation*}
\begin{equation*}\begin{array}{c}
$\xymatrix@M=1pt@C=6pt@R=6pt{
1 \ar@{-}[ddrr] & & 2\ar@{-}[dr] & & 3 \ar@{-}[dl] \\
 & & & *{\bullet_j}\ar@{-}[dl]  & \\
 & & *{\bullet_i} & & 
}$\end{array}\forall \, 2 \leq j \leq i \leq m-1. \end{equation*}

We have proved that the $m-Dend$ operad is Koszul, and so its dual $m-Tetra$ (calculated in \cite{Leroux}) is Koszul too.

\section*{Acknowledgements}

I am grateful to Benoit Fresse for many useful discussions
on this subject. I also would like to thank Muriel Livernet, Jean-Louis Loday, Martin Markl, Elisabeth Remm and Bruno Vallette
for their comments.

\clearpage

\newpage
\section*{Appendix}
{\bfseries Figure 1} : Example of a composition $\sigma \circ_i \tau$.

$\sigma$ : \[\xymatrix@M=1pt@C=6pt@R=6pt{
1 \ar@{-}[ddrr] & & 2\ar@{-}[dr] & & 3 \ar@{-}[dl] \\
 & & & v'_2\ar@{-}[dl]  & \\
 & & v'_1 \ar@{-}[d] & & \\
 & & 0 & &
}\]

$\tau$ : \[\xymatrix@M=1pt@C=6pt@R=6pt{
1 \ar@{-}[dr] & & 4\ar@{-}[dl] &  2 \ar@{-}[dr] & & 3\ar@{-}[dl] \\
  & v_2\ar@{-}[dr]& & & v_3\ar@{-}[dll] & \\
 & & v_1 \ar@{-}[d] & & & \\
 & & 0 & & & 
}\]

$\sigma \circ_1 \tau$ : \[\xymatrix@M=1pt@C=6pt@R=6pt{
1 \ar@{-}[dr] & 4\ar@{-}[d] & 2 \ar@{-}[d] & 3 \ar@{-}[dl]  & 5 \ar@{-}[d] & 6 \ar@{-}[dl]\\
 & v_2\ar@{-}[dr] & v_3 \ar@{-}[d] &  & v'_2 \ar@{-}[ddl] & \\
 & & v_1 \ar@{-}[dr] & & & \\
  & & & v'_1 \ar@{-}[d] & & \\
 & &  & 0 & & 
}\]

\vspace{1cm}
{\bfseries Figure 2} : The first treewise tensor $\lambda$ represents an element in $B(F(M))$, where $p_1$ and $p_2$ are elements in $F(M)$ and $i_1, \ldots, i_5$ a permutation of $1, \ldots, 5$. The edges in dots are the edges of the bar construction $B(F(M))$, the full edges are the edges of the free operad $F(M)$.

$\alpha$ : \[\xymatrix@M=1pt@C=6pt@R=6pt{
i_1 \ar@{.}[dr] & i_2\ar@{.}[d] & i_3 \ar@{.}[dl] & i_4 \ar@{.}[ddl]  & i_5 \ar@{.}[ddll] \\
 & p_2\ar@{.}[dr] & & \\
 & & p_1 \ar@{.}[d] &\\
 & & 0 & 
}\]

$p_1$ :\[\xymatrix@M=1pt@C=6pt@R=6pt{
j_1 \ar@{-}[dr] & & j_2\ar@{-}[dl] & & j_3 \ar@{-}[ddll] \\
 & x_2\ar@{-}[dr] & & & \\
 & & x_1 \ar@{-}[d] & & \\
 & & 0 & &
}\]

$p_2$ : \[\xymatrix@M=1pt@C=6pt@R=6pt{
k_1 \ar@{-}[ddrr] & & k_2\ar@{-}[dr] & & k_3 \ar@{-}[dl] \\
 & & & x'_2\ar@{-}[dl]  & \\
 & & x'_1 \ar@{-}[d] & & \\
 & & 0 & &
}\]

$\lambda=(\alpha,S)$ after substitution :
\[\xymatrix@M=1pt@C=6pt@R=6pt{
i_1 \ar@{.}[dr] & i_2 \ar@{.}[d] & i_3 \ar@{.}[dd] &   & i_4 \ar@{.} [d] & i_5 \ar@{.}[dl]\\
 & x_2\ar@{-}[dr] &  &  & x'_2 \ar@{-}[ddl] & \\
 & & x_1 \ar@{.}[dr] & & & \\
  & & & x'_1 \ar@{.}[d] & & \\
 & &  & 0 & & 
}\]

The set $D$ of cutting edges is reduced to the edge $\xymatrix@M=1pt@C=8pt@R=8pt{x_1 \ar@{.}[r] & x'_1}$.

The marked edges are the two full edges $x_2 - x_1$ and $x'_2 - x'_1$.


\begin{thebibliography}{99} 
   \bibitem{CL} F. Chapoton, M. Livernet, {\it Pre-Lie algebras and the rooted trees operad}, Internat. Res.
Notices \textbf{8}, (2001), 395-408.
   \bibitem{Benoit} B. Fresse, {\it Koszul duality of operads and homology of partition posets, in} "Homotopy theory and its applications (Evanston, 2002)", Contemp. Math. \textbf{346} (2004), 115-215. 
   \bibitem{GK} V. Ginzburg, M. Kapranov, {\it Koszul duality for operads}, Duke Math. J.  \textbf{76} (1995), 203-272.
   \bibitem{Gned} V. Gnedbaye, {\it Op\'erades des alg\`ebres $(k+1)$-aires, in} "Operads : Proceedings of Renaissance Conferences (Hartford, CT/Luminy, 1995)", Contemp. Math. \textbf{202} (1997), 83-113.
   \bibitem{Kriz} I. Kriz, Review MR 1301191, Mathematical Reviews (1996).
   \bibitem{Leroux} P. Leroux, {\it A simple symmetry generating operads related to rooted planar $m$-ary trees and polygonal numbers}, Journal of Integer Sequences, Vol.10 (2007), article 07.4.7.
   \bibitem{Loday} J.-L. Loday, {\it La renaissance des op\'erades, in} "S\'eminaire Bourbaki, Vol. 1994/95", Ast\'erisque \textbf{237} (1996), 47-74.
   \bibitem{Loday2} J.-L. Loday, {\it Dialgebras, in} "Dialgebras and related Operads", Springer Lecture Notes in Math. \textbf{1763} (2001), 7-66.
   \bibitem{Markl} M. Markl, {\it Distributive laws and the Koszulness}, Annales de l'Institut Fourier, \textbf{46(4)}, (1996), 307-323.
   \bibitem{MSS} M. Markl, S. Shnider, J. Stasheff, Operads in algebra, topology and physics, Mathematical Surveys and Monographs \textbf{96}, American Mathematical Society, 2002.
   \bibitem{} J. McCleary, A user's guide to spectral sequences (second edition), Cambridge Studies in Advanced Mathematics \textbf{58}, Cambridge University Press, 2001.
   \bibitem{Prid} S. Priddy, {\it Koszul resolutions}, Trans. Amer. Math. Soc. \textbf{152} (1970), 39-60.   
   \bibitem{Reutenauer} C. Reutenauer, Free Lie algebras, Oxford University Press, 1993. 
   

\end{thebibliography}
\end{document}